\documentclass[12pt]{amsart}

\usepackage{amssymb}
\usepackage{color}

\vfuzz=1.2pt \hfuzz=5.0pt \setlength{\topmargin}{0cm}
\setlength{\textwidth}{15.7cm} \setlength{\textheight}{53.2pc}
\setlength{\evensidemargin}{0.3cm} \setlength{\oddsidemargin}{0.3cm}

\input{prepictex.tex}
 \input{pictex.tex}
 \input{postpictex.tex}

\theoremstyle{plain}
 \newtheorem{thm}{Theorem}[section]
 \newtheorem{cor}[thm]{Corollary}
 \newtheorem{lem}[thm]{Lemma}
 \newtheorem{prop}[thm]{Proposition}
 
\newtheorem{remark}[thm]{Remark}
\theoremstyle{definition}
 \newtheorem{ex}[thm]{Example}
\theoremstyle{remark}

\def\C{{\mathbb{C}}}
\def\T{{\mathbb{T}}}
\def\N{{\mathbb{N}}}
\def\Zb{{\mathbb{Z}}}
\def\Mb{{\mathbb{M}}}
\def\R{{\mathbb{R}}}

\def\D{{\mathcal {D}}}
\def\F{{\mathcal{F}}}
\def\OO{{\mathcal{O}}}
\def\U{{\mathcal{U}}}
\def\Z{{\mathcal{Z}}}

\def\Ad{\operatorname{Ad}}
\def\id{\operatorname{id}}
\def\span{\operatorname{span}}

\def\rc{\lambda_u(\F_n)'\cap\OO_n}
\def\rcc{\lambda_u(\F_2)'\cap\OO_2}
\def\frc{\lambda_u(\F_n)'\cap\F_n}

\begin{document}

\title[Endomorphisms of $O_n$]
{On Endomorphisms of the Cuntz Algebra which Preserve the Canonical UHF-Subalgebra, II}

\author[Tomohiro Hayashi]{{Tomohiro Hayashi} }
\address
{Nagoya Institute of Technology, 
Gokiso-cho, Showa-ku, Nagoya, Aichi, 466--8555, Japan}
\email{hayashi.tomohiro@nitech.ac.jp}

\author[Jeong Hee Hong]{{Jeong Hee Hong}}
\address{Department of Data Information, 
Korea Maritime and Ocean University, 
Busan 606--791, South Korea}
\email{hongjh@hhu.ac.kr}

\author[Wojciech Szyma\'{n}ski]{Wojciech Szyma\'{n}ski}
\address{Department of Mathematics and Computer Science, 
The University of Southern Denmark, 
Campusvej 55, DK--5230 Odense M, Denmark}
\email{szymanski@imada.sdu.dk}
\thanks{T. Hayashi was supported by JSPS KAKENHI Grant No. 25400109. 
J. H. Hong was supported by Basic Science Research Program through 
the National Research Foundation of Korea (NRF) funded by the Ministry of Education, Science and 
Technology (Grant No. 2012R1A1A2039991).
W. Szyma\'{n}ski was partially supported by the FNU Research Project `Operator algebras, 
dynamical systems and quantum information theory' (2013--2015), the Villum Fonden Research 
Grant `Local and global structures of groups and their algebras' (2014--2018), and by the Mittag-Leffler 
Institute during his stay there in January-February, 2016.}

\date{March 30, 2016}

\baselineskip=17pt

\maketitle

\begin{abstract}
It was shown recently by Conti, R{\o}rdam and Szyma\'{n}ski that there exist endomorphisms 
$\lambda_u$ of the Cuntz algebra $\OO_n$ such that $\lambda_u(\F_n)\subseteq\F_n$ but $u\not\in\F_n$, and a question was raised if for such a $u$ there must always exist a unitary $ v\in\F_n$ with $\lambda_u|_{\F_n} 
= \lambda_v|_{\F_n}$. In the present paper, we answer this question to the negative. To this end, 
we analyze the structure of such endomorphisms $\lambda_u$ for which the relative commutant 
$\lambda_u(\F_n)'\cap\F_n$ is finite dimensional. 
\end{abstract}


\section{Introduction and preliminaries}

This paper is devoted to continuation of the line of investigation of exotic endomorphisms of the Cuntz algebras 
initiated in \cite{CRS}. Our main result is solution of a question raised therein, see below for details. Our startegy 
is based on a detailed analysis of such endomorphisms $\lambda_u$ of $\OO_n$ that globally preserve the core 
UHF subagebra $\F_n$ and have finite dimensional relative commutant $\lambda_u(\F_n)'\cap\OO_n$, and 
builds on the earlier results in this direction obtained in \cite{H}. 

The Cuntz algebra $\mathcal{O}_{n}$, $n\geq 2$, is the $C^{*}$-algebra generated by isometries 
$S_{1},\dots,S_{n}$ satisfying $\sum_{i=1}^{n}S_{i}{S_{i}}^{*}=1$. It is a purely infinite, 
simple $C^*$-algebra, independent of the choice of generating isometries, \cite{Cun1}. 
We denote by $W_n^k$ the set of $k$-tuples $\mu = (\mu_1,\ldots,\mu_k)$
with $\mu_m \in \{1,\ldots,n\}$, and by $W_n$ the union $\cup_{k=0}^\infty W_n^k$,
where $W_n^0 = \{0\}$. If $\mu \in W_n^k$ then $|\mu| = k$ is the length of $\mu$. 
If $\mu = (\mu_1,\ldots,\mu_k) \in W_n$ then $S_\mu = S_{\mu_1} \ldots S_{\mu_k}$
($S_0 = 1$ by convention) is an isometry in $\OO_n$. 
Every word in $\{S_i, S_i^* \ | \ i = 1,\ldots,n\}$ can be uniquely expressed as
$S_\mu S_\nu^*$, for $\mu, \nu \in W_n$ \cite[Lemma 1.3]{Cun1}.

The gauge action $\gamma$ of the circle group ${\Bbb T}$ on $\mathcal{O}_{n}$ is defined 
by $\gamma_{z}(S_{i})=zS_{i}$, $z\in\T$. Let $\mathcal{F}_{n}$ be the fixed point algebra of $\gamma$. 
Denote $\F_n^{(k)}:=\span\{S_\mu S_\nu^* \mid \mu, \nu \in W_n^k\}$. Then  $\F_n$ is  
generated by $\F_n^{(k)}$, $k=1,2,\ldots$, and each $\F_n^{(k)}$ is isomorphic to the matrix 
algebra $M_{n^k}(\C)$. Thus $\F_n$ is isomorphic to the UHF-algebra of type $n^{\infty}$, and hence  it 
 has a unique tracial state 
$\tau$. There exists a faithful conditional expectation $E:\mathcal{O}_{n}\rightarrow \mathcal{F}_{n}$, 
defined by integration with respect to the Haar measure on $\T$ as 
$$
E(x)=\int_{\Bbb T}\gamma_{z}(x)dz.
$$
For each $k\in\Zb$ we denote by $\OO_n^{(k)}$ the corresponding spectral subspace for 
$\gamma$ in $\OO_n$, 
$$ \OO_n^{(k)} := \{x\in\OO_n \mid \gamma_z(x)=z^k, \;\forall z\in\T\}. $$
Thus, in particular, $\OO_n^{(0)}=\F_n$. 

The $C^*$-subalgebra of $\OO_n$ generated by projections $P_\mu:=S_\mu S_\mu^*$, $\mu\in W_n$, is a 
MASA (maximal abelian subalgebra) in $\OO_n$. We call it the diagonal and denote $\D_n$, 
also writing $\D_n^k$ for $\D_n\cap\F_n^{(k)}$. 

The canonical shift endomorphism $\varphi:\OO_n\to\OO_n$ is defined by 
$$ 
\varphi(x)=\sum_{i=1}^{n}S_{i}xS_{i}^{*}.
$$ 
It is easy to see that $S_{i}x=\varphi(x)S_{i}$ and $x{S_{i}}^{*}={S_{i}}^{*}\varphi(x)$ 
for all $x\in\OO_n$. 

As shown by Cuntz in \cite{Cun2}, there exists a bijective correspondence
between unitaries in $\OO_n$ (whose collection is denoted $\U(\OO_n)$) and unital 
$*$-endomorphisms of $\OO_n$, determined by
$$ 
\lambda_u(S_i) = u S_i, \;\;\; i=1,\ldots, n. 
$$
We have $\Ad(u)=\lambda_{u\varphi(u^*)}$ for all $u\in\U(\OO_n)$. 
If $u\in\U(\OO_n)$ then for each positive integer $k$ we denote
\begin{equation}\label{uk}
u_k = u \varphi(u) \cdots \varphi^{k-1}(u).
\end{equation}
Here $\varphi^0=\id$, and we agree that $u_k^*$ stands for $(u_k)^*$. If
$\alpha$ and $\beta$ are multi-indices of length $k$ and $m$, respectively, then
$\lambda_u(S_\alpha S_\beta^*)=u_kS_\alpha S_\beta^*u_m^*$. 

\medskip
The Cuntz correspondence between unitaries and endomorphisms of $\OO_n$ provides a very 
efficient tool for investigations of the latter. In this note, we continue the study (by several authors) 
of those unital endomorphisms which globally preserve the UHF-subalgebra $\F_n$. For example, such 
endomorphisms were analyzed from the point of view of the Jones-Kosaki-Watatani index theory 
in \cite{I} and \cite{CP}, and in connection with Hopf algebra actions in \cite{Cun3} and \cite{L}. 
More recently, interesting combinatorial approaches to the study of permutative endomorphisms of this 
type have been found (e.g. see \cite{CSTrans}, \cite{CHSAdvMath}, and a survey article \cite{CHSBanach}). 

It was observed by Cuntz in his groundbreaking paper \cite{Cun2} that an {\em automorphism} $\lambda_u$ 
globally preserves $\F_n$ if and only if $u\in\F_n$. The situation is more complex with 
{\em proper endomorphisms}. 
Clearly, $u\in\F_n$ implies $\lambda_u(\F_n)\subseteq\F_n$, \cite{Cun2}, but the question if the converse 
is true remained open until very recently. Indeed, it was shown in \cite{CRS} that there exist unitaries $u$ 
in $\OO_n\setminus\F_n$ such that $\lambda_u(\F_n)\subseteq\F_n$. All such examples found therein were of 
the form $u=wv$ with $w\in\lambda_{u}(\mathcal{F}_{n})'\cap\mathcal{O}_{n}$ and $v\in \mathcal{F}_{n}$. 
In such a case, we also have $\lambda_u(x)=\lambda_v(x)$ for all $x\in\F_n$. 
Thus a natural question arises if such a factorization of $u$ is always possible whenever $\lambda_u(\F_n)\subseteq\F_n$ (cf. \cite[Problem 5.3]{CHSBanach}). 

Some progress towards answering this question has been made recently in \cite{H} and \cite{HS}. 
The main purpose of the present paper is to develop definite methods for analyzing endomorphisms 
$\lambda_u$ of $\OO_n$ satifying $\lambda_u(\F_n)\subseteq\F_n$ and an additional condition that 
the relative commutant $\lambda_u(\F_n)'\cap\F_n$ be finite dimensional. In particular, we give 
a verifiable criterion for determining if the aforementioned decomposition is possible, 
Corollary \ref{maincriterion}. Based on this criterion, 
in Section 3 we give an explicit example of a unitary $u\in\OO_2$ such that $\lambda_u(\F_2)\subseteq
\F_2$ and $\dim \lambda_u(\F_2)'\cap\F_2 < \infty$ but there is no unitary $v\in\F_2$ such that 
$\lambda_u|_{\F_2} = \lambda_v|_{\F_2}$, see Example \ref{example1}. In this way, we answer 
to the negative the question raised in \cite{CRS} and \cite{CHSBanach}. 


\section{The relative commutants}
We begin by recording for future references a few simple facts, essentially contained in \cite{CRS} 
and \cite{H}. 

\begin{prop}\label{3conditions}
Let $u\in\U(\OO_n)$. Then the following conditions are equivalent.
\begin{enumerate}
\item[(1)]  $\lambda_{u}(\mathcal{F}_{n})\subseteq \mathcal{F}_{n}$, 
\item[(2)]  $\lambda_{\gamma_{z}(u)}|_{\mathcal{F}_{n}}=\lambda_{u}|_{\mathcal{F}_{n}}$ 
for all $z\in {\Bbb T}$, 
\item[(3)]  $u\gamma_{z}(u^{*})\in \lambda_{u}(\mathcal{F}_{n})'\cap \mathcal{O}_{n}$ 
for all $z\in {\Bbb T}$.
\end{enumerate}
\end{prop}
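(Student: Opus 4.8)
The plan rests on a single covariance identity. Since $\gamma_z(S_i)=zS_i$, a check on generators gives $\gamma_z\circ\lambda_u\circ\gamma_z^{-1}=\lambda_{\gamma_z(u)}$; restricting to $\F_n$, where $\gamma_z$ acts trivially, this becomes
\[
\gamma_z(\lambda_u(x))=\lambda_{\gamma_z(u)}(x),\qquad x\in\F_n .
\]
I would also record that $\gamma_z$ commutes with the shift $\varphi$ (again checked on generators), whence $\gamma_z(u)_k=\gamma_z(u_k)$ for every $k$.

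The equivalence (1)$\Leftrightarrow$(2) then drops out at once: condition (1) says each $\lambda_u(x)$ with $x\in\F_n$ is gauge invariant, i.e. $\gamma_z(\lambda_u(x))=\lambda_u(x)$ for all $z$, and by the displayed identity this is exactly $\lambda_{\gamma_z(u)}(x)=\lambda_u(x)$, which is (2). For (2)$\Leftrightarrow$(3) I would pass to the matricial levels. Using $\lambda_u(S_\mu S_\nu^*)=u_kS_\mu S_\nu^*u_k^*$ for $|\mu|=|\nu|=k$, one has $\lambda_u|_{\F_n^{(k)}}=\Ad(u_k)|_{\F_n^{(k)}}$, so (2) unfolds into the condition that
\[
p_k:=u_k^*\gamma_z(u_k)\in(\F_n^{(k)})'\cap\OO_n\quad\text{for all }k,z,
\]
while, since $\lambda_u(\F_n)$ is the closure of the union of the $u_k\F_n^{(k)}u_k^*$, condition (3) unfolds into
\[
q_k:=u_k^*\,u\gamma_z(u^*)\,u_k\in(\F_n^{(k)})'\cap\OO_n\quad\text{for all }k,z.
\]

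The real work, and the step I expect to be the main obstacle, is reconciling these two families of relative-commutant conditions. My aim is to find a single algebraic identity linking $p_k$ and $q_k$ through the shift. Writing $u_k=u\varphi(u_{k-1})$ and using $\gamma_z\varphi=\varphi\gamma_z$, I would try to establish
\[
q_k=\varphi(p_{k-1})\,p_k^*\qquad(k\ge1,\ p_0=1),
\]
a telescoping cancellation that hinges on $\varphi(u_{k-1})u_k^*=u^*$. Granting this, the reconciliation becomes an induction once I prove the stability fact that $\varphi(w)\in(\F_n^{(k)})'\cap\OO_n$ if and only if $w\in(\F_n^{(k-1)})'\cap\OO_n$; this in turn should follow from $\F_n^{(k)}=\span\{S_iaS_j^*:a\in\F_n^{(k-1)},\ 1\le i,j\le n\}$ together with $\varphi(w)S_iaS_j^*=S_iwaS_j^*$. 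With both in hand, (2)$\Rightarrow$(3) is the observation that $q_k$ is a product of $\varphi(p_{k-1})\in(\F_n^{(k)})'$ and $p_k^*\in(\F_n^{(k)})'$, while (3)$\Rightarrow$(2) follows by inducting on $k$ through the rearranged identity $p_k=q_k^*\varphi(p_{k-1})$. I anticipate the delicate points to be the bookkeeping in the cancellation identity and making sure the stability fact is an honest equivalence rather than a one-sided implication, since it is the engine driving both directions of the induction.
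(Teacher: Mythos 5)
Your proof is correct, and while the first half coincides with the paper's, the second half takes a genuinely different route. For $(1)\Leftrightarrow(2)$ you use exactly the paper's argument: the covariance identity $\gamma_z\lambda_u\gamma_z^{-1}=\lambda_{\gamma_z(u)}$ turns (2) into gauge-invariance of $\lambda_u(\F_n)$. For $(2)\Leftrightarrow(3)$, however, the paper simply invokes Propositions 2.1 and 4.7 of \cite{CRS}, which give the general fact that $\lambda_u|_{\F_n}=\lambda_v|_{\F_n}$ if and only if $vu^*\in\lambda_u(\F_n)'\cap\OO_n$, and specializes to $v=\gamma_z(u)$ (using that the relative commutant is $*$-closed to pass between $\gamma_z(u)u^*$ and $u\gamma_z(u^*)$). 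You instead reprove this special case from scratch: your reduction to the matricial levels is right ($\lambda_u|_{\F_n^{(k)}}=\Ad(u_k)|_{\F_n^{(k)}}$, and $\lambda_u(\F_n)$ is the closed union of the $u_k\F_n^{(k)}u_k^*$), your telescoping identity $q_k=\varphi(p_{k-1})p_k^*$ checks out (it reduces to $\varphi(u_{k-1})u_k^*=u^*$ together with $\gamma_z\varphi=\varphi\gamma_z$), and the stability fact is indeed a two-sided equivalence, since from $S_i(wa-aw)S_j^*=0$ one recovers $wa=aw$ by multiplying by $S_i^*$ on the left and $S_j$ on the right. What your approach buys is self-containedness -- it is essentially an inline proof of the relevant case of the cited \cite{CRS} lemma -- at the cost of the bookkeeping you correctly identified as the delicate part; the paper's version is shorter but leans on external results.
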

\begin{proof}
Clearly, $\gamma_{z}\lambda_{u}\gamma_{z}^{-1}=\lambda_{\gamma_{z}(u)}$ for all $z\in\T$. 
Thus condition (2) above is equivalent to $\gamma_{z}\lambda_{u}|_{\mathcal{F}_{n}}
=\lambda_{u}|_{\mathcal{F}_{n}}$ for all $z\in\T$.  Obviously, this holds if and only if 
$\lambda_{u}(\mathcal{F}_{n})\subseteq \mathcal{F}_{n}$. That is, $(1)$ is equivalent to $(2)$. 

It is an immediate consequence of Proposition 2.1 and Proposition 4.7 from \cite{CRS} that 
$\lambda_{u}|_{\mathcal{F}_{n}} = \lambda_{v}|_{\mathcal{F}_{n}}$ if and only if 
$vu^{*}\in \lambda_{u}(\mathcal{F}_{n})'\cap\mathcal{O}_{n}$. This gives $(2)$ is equivalent to $(3)$. 
\end{proof}

\begin{prop}\label{trivialrelativecommutant}
If $\lambda_{u}(\mathcal{F}_{n})\subseteq \mathcal{F}_{n}$ and 
$\lambda_{u}(\mathcal{F}_{n})'\cap\mathcal{F}_{n}={\Bbb C}1$, then $u\in \mathcal{F}_{n}$. 
\end{prop}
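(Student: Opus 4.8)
The plan is to show that the hypotheses force $u\gamma_z(u^*)=1$ for every $z\in\T$, which is the same as $\gamma_z(u)=u$ for all $z$, i.e. $u\in\F_n=\OO_n^{(0)}$. First I would invoke Proposition \ref{3conditions}: since $\lambda_u(\F_n)\subseteq\F_n$, the unitaries $c_z:=u\gamma_z(u^*)$ lie in $\lambda_u(\F_n)'\cap\OO_n$ for all $z$. The decisive observation is that $E$ is an $\F_n$-bimodule map, so for $x\in\lambda_u(\F_n)'\cap\OO_n$ and $a\in\lambda_u(\F_n)\subseteq\F_n$ one has $aE(x)=E(ax)=E(xa)=E(x)a$; hence $E$ carries $\lambda_u(\F_n)'\cap\OO_n$ into $\lambda_u(\F_n)'\cap\F_n$, which by hypothesis equals $\C1$. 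Therefore $E(c_z)\in\C1$ for every $z$. This is the only place the triviality of the relative commutant enters, and it is precisely what fails for the exotic examples of \cite{CRS}.

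Next I would compute $E(c_z)$ in terms of the spectral components $u_{(k)}:=\int_\T \bar z^{k}\gamma_z(u)\,dz\in\OO_n^{(k)}$. Using invariance of Haar measure one finds $E(c_z)=\sum_k \bar z^{k}\,u_{(k)}u_{(k)}^*$, and integrating against $z^k$ isolates each coefficient. Combined with $E(c_z)\in\C1$ this gives $u_{(k)}u_{(k)}^*=\alpha_k1$ with scalars $\alpha_k=\tau(u_{(k)}u_{(k)}^*)\ge 0$ satisfying $\sum_k\alpha_k=E(c_1)=E(1)=1$. Thus, whenever $\alpha_k>0$, the element $v_k:=\alpha_k^{-1/2}u_{(k)}$ is a coisometry in $\OO_n^{(k)}$. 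To upgrade these coisometries to unitaries I would use the degree-zero part of $u^*u=1$, namely $\sum_k u_{(k)}^*u_{(k)}=1$; writing $u_{(k)}^*u_{(k)}=\alpha_k\,v_k^*v_k$ this reads $\sum_k\alpha_k v_k^*v_k=1=\sum_k\alpha_k$, so $\sum_k\alpha_k(1-v_k^*v_k)=0$. As each summand is positive, $v_k^*v_k=1$ whenever $\alpha_k>0$, so each such $v_k$ is a \emph{unitary} in $\OO_n^{(k)}$.

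The final step, which I expect to be the genuine obstacle, is to rule out nonzero degrees, and this is where the coisometry-to-unitary upgrade pays off. Here I would appeal to the fact that $\OO_n$ admits no unitary of nonzero gauge degree: the canonical state $\omega=\tau\circ E$ is the KMS state for the gauge action at inverse temperature $\log n$, so a unitary $v\in\OO_n^{(k)}$ would satisfy $1=\omega(vv^*)=n^{-k}\omega(v^*v)=n^{-k}$, forcing $k=0$. Consequently $\alpha_k=0$ for all $k\ne 0$, and $u=u_{(0)}\in\F_n$, as claimed.

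The one technical subtlety I would treat with care is convergence of the spectral expansion: the Fourier series $u=\sum_k u_{(k)}$ need not converge in norm. I would sidestep this entirely by computing $E(c_z)=\int_\T\gamma_w(u)\gamma_{wz}(u^*)\,dw$ directly and extracting the quantities $u_{(k)}u_{(k)}^*$ by integrating $E(c_z)$ against the characters $z\mapsto z^k$, rather than manipulating the series for $u$ itself.
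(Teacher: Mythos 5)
Your proof is correct, but it takes a genuinely different route from the paper's. The paper disposes of this proposition in two lines of citations: first \cite[Theorem 1.1]{H} is invoked to upgrade $\lambda_u(\F_n)'\cap\F_n=\C1$ to $\lambda_u(\F_n)'\cap\OO_n=\C1$ (given $\lambda_u(\F_n)\subseteq\F_n$), and then the result of \cite{CRS} that a trivial relative commutant in $\OO_n$ forces $u\in\F_n$ is applied. Your argument bypasses the normalizer theorem of \cite{H} entirely: the observation that $E$, being an $\F_n$-bimodule map, carries $\lambda_u(\F_n)'\cap\OO_n$ into $\lambda_u(\F_n)'\cap\F_n=\C1$ means you never need to know that the relative commutant in $\OO_n$ is trivial --- only that its image under $E$ is --- and from there the Fourier/KMS analysis of the cocycle $z\mapsto u\gamma_z(u^*)$ (essentially the mechanism underlying the \cite{CRS} result, here rederived from scratch) closes the argument. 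What the paper's proof buys is brevity at the cost of two external black boxes; what yours buys is a self-contained and arguably more illuminating proof that isolates exactly where the hypothesis $\lambda_u(\F_n)'\cap\F_n=\C1$ enters. The remaining technical points are all standard and you flag the main one: $\sum_k\alpha_k=1$ follows from continuity of $z\mapsto E(c_z)$ together with non-negativity of its Fourier coefficients; the identity $\sum_k u_{(k)}^*u_{(k)}=1$ should be read in the weak sense (evaluate against an arbitrary state, which is all the positivity argument for $v_k^*v_k=1$ requires); and the final passage from $u_{(k)}=0$ for $k\neq0$ to $u=u_{(0)}$ uses norm convergence of the Ces\`{a}ro means of $z\mapsto\gamma_z(u)$. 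None of these is a gap.
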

\begin{proof}
If $\lambda_{u}(\mathcal{F}_{n})\subseteq \mathcal{F}_{n}$ and 
$\lambda_{u}(\mathcal{F}_{n})'\cap\mathcal{F}_{n}={\Bbb C}1$, then 
$\lambda_{u}(\mathcal{F}_{n})'\cap \mathcal{O}_{n}={\Bbb C}1$ as well,  \cite[Theorem 1.1]{H}.  
As shown in \cite{CRS}, this implies that $u\in\F_n$.  
\end{proof}

\begin{prop}\label{oldlemma06}
Let $u$ be a unitary in $\OO_n$. Then $u=wv$ for some $w\in \lambda_{u}(\mathcal{F}_{n})'
\cap \mathcal{O}_{n}$ and a unitary $v\in \mathcal{F}_{n}$ if and only if there exists a unitary 
$y\in\lambda_{u}(\mathcal{F}_{n})'\cap \mathcal{O}_{n}$ such that $u\gamma_{z}(u^{*}) =  
y\gamma_{z}(y^{*})$ for all $z\in\T$. 
\end{prop}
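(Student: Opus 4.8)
The plan is to show that in both directions the \emph{same} unitary plays the roles of $w$ and $y$, with the complementary factor always being $v := w^{*}u$. The conceptual crux is the observation that, for a unitary $w$, the element $v = w^{*}u$ lies in $\F_n$ precisely when $u\gamma_{z}(u^{*}) = w\gamma_{z}(w^{*})$ for all $z\in\T$; once this is isolated, both implications become short manipulations.

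For the forward implication I would start from a factorization $u = wv$ with $w\in\lambda_{u}(\F_n)'\cap\OO_n$ and $v\in\F_n$ unitary. First note that $w = uv^{*}$ is automatically a unitary. Then compute $u\gamma_{z}(u^{*}) = wv\,\gamma_{z}(v^{*})\gamma_{z}(w^{*})$ and use $\gamma_{z}(v^{*}) = v^{*}$ (valid since $v\in\F_n$) together with $vv^{*}=1$ to collapse this to $w\gamma_{z}(w^{*})$. Setting $y := w$ then furnishes the required unitary in $\lambda_{u}(\F_n)'\cap\OO_n$ satisfying the gauge identity.

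For the reverse implication, given a unitary $y\in\lambda_{u}(\F_n)'\cap\OO_n$ with $u\gamma_{z}(u^{*}) = y\gamma_{z}(y^{*})$, I would set $w := y$ and $v := y^{*}u$. Both are unitaries, and $w$ lies in the relative commutant by hypothesis, so the only thing left to verify is $v\in\F_n$, i.e.\ gauge-invariance. Rewriting the hypothesis as $\gamma_{z}(y^{*}) = y^{*}u\,\gamma_{z}(u^{*})$ and substituting into $\gamma_{z}(v) = \gamma_{z}(y^{*})\gamma_{z}(u)$ yields $\gamma_{z}(v) = y^{*}u\,\gamma_{z}(u^{*}u) = y^{*}u = v$, so $v$ is fixed by the gauge action and hence belongs to $\F_n$. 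This gives $u = wv$ as desired.

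Since both directions reduce to these algebraic manipulations, there is no serious obstacle; the only points requiring care are recording that $w = uv^{*}$ is genuinely unitary in the forward direction, and carrying out the substitution correctly in the reverse direction so that the factor $\gamma_{z}(u^{*})\gamma_{z}(u) = \gamma_{z}(u^{*}u)=1$ cancels. The entire content is that the gauge relation $u\gamma_{z}(u^{*}) = w\gamma_{z}(w^{*})$ is nothing but a reformulation of the condition $w^{*}u\in\F_n$.
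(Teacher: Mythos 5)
Your proposal is correct and follows essentially the same route as the paper: take $y:=w$ in the forward direction (using gauge-invariance of $v$ to get $u\gamma_z(u^*)=w\gamma_z(w^*)$), and $w:=y$, $v:=y^*u$ in the reverse direction, checking that $v$ is fixed by the gauge action and hence lies in $\F_n$. The added remark that $w=uv^*$ is automatically unitary is a harmless (and correct) elaboration of what the paper leaves implicit.
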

\begin{proof}
If $u=wv$ for some $w\in \lambda_{u}(\mathcal{F}_{n})'\cap \mathcal{O}_{n}$ and $v\in\U( \mathcal{F}_{n})$, then $u\gamma_{z}(u^{*})=w\gamma_{z}(w^{*})$, and it suffices to put $y=w$. 

Conversely, if there exists a unitary $y\in\lambda_{u}(\mathcal{F}_{n})'\cap \mathcal{O}_{n}$ such 
that $u\gamma_{z}(u^{*}) =  y\gamma_{z}(y^{*})$ for all $z\in\T$ then $y^*u$ is fixed by all $\gamma_z$. 
Thus $y^*u\in\F_n$ and it suffices to put $w=y$ and $v=y^*u$. 
\end{proof}

From now on, we make  {\bf  a standing assumption} that $u\in\U(\OO_n)$ is such that 
\begin{equation}\label{assumption}
\lambda_{u}(\mathcal{F}_{n})\subseteq \mathcal{F}_{n} \;\; \text{and} \;\;
    {\rm dim}\lambda_{u}(\mathcal{F}_{n})'\cap\mathcal{F}_{n}<\infty. 
\end{equation}

As shown in \cite{H}, assumption (\ref{assumption}) above entails a number of important consequences, 
which we summarize as follows. 
\begin{itemize}
\item We also have ${\rm dim}\lambda_{u}(\mathcal{F}_{n})'\cap\mathcal{O}_{n}<\infty$. 
\item There exists a unitary group $\{u_{z}\}_{z\in{\Bbb T}}$ in the center of $\lambda_{u}(\mathcal{F}_{n})'
\cap \mathcal{F}_{n}$ such that ${\rm Ad}u_{z}(x)=\gamma_{z}(x)$ for all $x\in\lambda_{u}(\mathcal{F}_{n})'
\cap \mathcal{O}_{n}$. 
\item Minimal projections in 
$\lambda_{u}(\mathcal{F}_{n})'\cap \mathcal{F}_{n}$ are minimal in $\lambda_{u}(\mathcal{F}_{n})'
\cap \mathcal{O}_{n}$ as well. Thus $\lambda_{u}(\mathcal{F}_{n})'\cap \mathcal{O}_{n}$ 
contains a MASA consisting of projections in $\lambda_{u}(\mathcal{F}_{n})'\cap \mathcal{F}_{n}$. 
\end{itemize}

The proof of the following theorem is modelled after that of \cite[Lemma 1.11]{H}.

\begin{thm}\label{main1}
Let $u\in\U(\OO_n)$ be such that $\lambda_{u}(\mathcal{F}_{n})\subseteq \mathcal{F}_{n}$ and 
${\rm dim}\lambda_{u}(\mathcal{F}_{n})'\cap \mathcal{F}_{n}<\infty$. Then 
there exist unitaries  $w\in \lambda_{u}(\mathcal{F}_{n})'\cap \mathcal{O}_{n}$ and $v\in \OO_n$,  
and a unitary group $\{v_{z}\}_{z\in{\Bbb T}}\subseteq \lambda_{u}(\mathcal{F}_{n})'
\cap \mathcal{F}_{n}$ satisfying $u=wv$ and $\gamma_z(v)=v_z v$ for all $z\in{\Bbb T}$.  
\end{thm}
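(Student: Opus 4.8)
The plan is to recast the statement as the problem of untwisting a gauge cocycle living in the finite–dimensional relative commutant, and then to solve it by a single simultaneous unitary conjugation of spectral projections. Write $R:=\rc$ and $R_0:=\frc$. By the standing assumption and the consequences of \cite{H} recalled above, $R$ is finite dimensional, there is a strongly continuous unitary group $\{u_z\}_{z\in\T}$ lying in the centre of $R_0$ with $\Ad u_z|_R=\gamma_z|_R$, and $R_0$ contains a MASA of $R$. I would first note that $\lambda_u(\F_n)$ is $\gamma$-invariant: by Proposition \ref{3conditions} one has $\lambda_{\gamma_z(u)}|_{\F_n}=\lambda_u|_{\F_n}$, so $\gamma_z(\lambda_u(\F_n))=\lambda_{\gamma_z(u)}(\F_n)=\lambda_u(\F_n)$; hence $R$ and $R_0$ are $\gamma$-invariant, and on $R$ the gauge action is implemented by $\{u_z\}$. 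By Proposition \ref{3conditions}(3) the elements $c(z):=u\gamma_z(u^*)$ lie in $R$, and a direct computation gives the cocycle identity $c(zw)=c(z)\gamma_z(c(w))$ together with $\gamma_z(u)=c(z)^*u$.

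The first genuine step is to straighten this cocycle into an honest representation. I would set $\tilde c(z):=c(z)u_z\in R$ and check, using $\gamma_z(c(w))=u_zc(w)u_z^*$ and $u_{zw}=u_zu_w$, that
\[
\tilde c(z)\tilde c(w)=c(z)\gamma_z(c(w))u_{zw}=c(zw)u_{zw}=\tilde c(zw),
\]
so that $\{\tilde c(z)\}_{z\in\T}$ is a strongly continuous one-parameter unitary group in the finite-dimensional algebra $R$. It therefore has a spectral decomposition $\tilde c(z)=\sum_k z^kE_k$ with finitely many mutually orthogonal projections $E_k\in R$ summing to $1$.

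The crux, and the step I expect to be the main obstacle, is to conjugate all of the $E_k$ simultaneously into $R_0$. Because $\gamma|_R=\Ad u_z$ is inner it fixes the centre of $R$ pointwise and so preserves every central summand, and $R_0$ splits along the same summands. On each summand the MASA of $R$ sitting inside $R_0$ provides a full orthogonal system of minimal (rank-one) projections of $R$ that belong to $R_0$; grouping these according to the ranks of the $E_k$ in that summand yields mutually orthogonal projections $P_k\in R_0$, summing to $1$, with $P_k$ Murray--von Neumann equivalent to $E_k$ in $R$. Assembling partial isometries that implement these equivalences produces a unitary $a\in R$ with $aE_ka^*=P_k$ for all $k$, and hence $a\tilde c(z)a^*=\sum_k z^kP_k\in R_0$ for every $z$. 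It is exactly here that the structural input from \cite{H} is indispensable: without a MASA of $R$ inside $R_0$ there need not be enough orthogonal projections in $R_0$ to match the rank profile of $\tilde c$, and the conjugation could fail.

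Finally I would assemble the data. Putting $w:=a^*\in R$ and $v:=au$ gives $u=wv$ with $w$ a unitary in $R$. Writing $g(z):=a\tilde c(z)a^*\in R_0$ and using $\gamma_z(a)=u_zau_z^*$, $\gamma_z(u)=c(z)^*u$ and $u_z^*c(z)^*=\tilde c(z)^*$, one computes
\[
\gamma_z(v)=u_zau_z^*c(z)^*u=u_z\,a\tilde c(z)^*a^*\,au=u_zg(z)^*\,v,
\]
so $\gamma_z(v)=v_zv$ with $v_z:=u_zg(z)^*$. Since $u_z,g(z)\in R_0$ we get $v_z\in R_0=\frc$, and since $u_z$ is central in $R_0$ and $g$ is a homomorphism, $v_zv_w=u_zu_wg(z)^*g(w)^*=u_{zw}g(zw)^*=v_{zw}$, so $\{v_z\}_{z\in\T}$ is a unitary group in $R_0$. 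This delivers $w$, $v$ and $\{v_z\}$ with all the required properties.
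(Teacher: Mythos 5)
Your proposal is correct and follows essentially the same route as the paper: you form the same unitary group $u\gamma_z(u^*)u_z$ in $\lambda_u(\F_n)'\cap\OO_n$, conjugate its spectral projections into the MASA of that algebra sitting inside $\lambda_u(\F_n)'\cap\F_n$, and then set $v=w^*u$ and $v_z=u_z y_z^*$ exactly as in the paper's argument. The only difference is that you spell out the rank-matching construction of the conjugating unitary, which the paper leaves implicit.
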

\begin{proof}
At first we note that $u\gamma_{z}(u^{*})u_{z}$ is a unitary group in $\lambda_{u}(\mathcal{F}_{n})'
\cap \mathcal{O}_{n}$. Indeed, 
$$ (u\gamma_{z_1}(u^*)u_{z_1})(u\gamma_{z_2}(u^*)u_{z_2}) = u\gamma_{z_1}(u^*)(\Ad u_{z_1})(u)
u_{z_1}\gamma_{z_2}(u^*)u_{z_2} $$
$$ = u\gamma_{z_1}(u^*)\gamma_{z_1}(u)\gamma_{z_1}(\gamma_{z_2}(u^*))u_{z_1}u_{z_2} = 
u\gamma_{z_1z_2}(u^*)u_{z_1z_2}. $$ 
Since ${\rm dim}\lambda_{u}(\mathcal{F}_{n})'\cap \mathcal{O}_{n}<\infty$, this unitary group may be 
diagonalized. On the other hand, $\lambda_{u}(\mathcal{F}_{n})'\cap \mathcal{O}_{n}$ contains a MASA 
composed of projections in $\lambda_{u}(\mathcal{F}_{n})'\cap \mathcal{F}_{n}$. Thus, 
there exists a unitary $w\in \lambda_{u}(\mathcal{F}_{n})'\cap \mathcal{O}_{n}$ such that  
$y_z:=w^*(u\gamma_z(u^*)u_z)w$ is a unitary group in $\lambda_{u}(\mathcal{F}_{n})'\cap \mathcal{F}_{n}$.
Since each $u_z$ is in the center of $\lambda_{u}(\mathcal{F}_{n})'\cap \mathcal{F}_{n}$, 
the unitary groups $\{y_{z}\}_{z\in\T}$ and $\{u_{z}\}_{z\in\T}$ commute. 

Set $v_z:=u_zy_z^*$, $z\in{\Bbb T}$, and $v:=w^*u$. Then $v_z$ is a unitary group in 
$\lambda_{u}(\mathcal{F}_{n})'\cap \mathcal{F}_{n}$ and 
$$ \gamma_z(v) = \gamma_z(w^*u) = u_z(u_z^*\gamma_z(w^*u)u^*w)w^*u = 
u_zy_z^*w^*u = v_zw^*u = v_zv. $$ 
for all $z\in{\Bbb T}$. This completes the proof. 
\end{proof}

We keep the notation from Theorem \ref{main1}, assuming that unitaries 
$w$, $v$ and $v_z$ have the properties described therein. Thus, in particular, 
$\lambda_{u}|_{\mathcal{F}_{n}}=\lambda_{v}|_{\mathcal{F}_{n}}$ by \cite[Proposition 2.1]{CRS}.
Consequently, ${\rm Ad}v\circ\varphi$ is an automorphism of $\lambda_{u}(\mathcal{F}_{n})'
\cap \mathcal{O}_{n}$, by \cite[Proposition 2.3 and Lemma 2.4]{CRS}. 

\begin{lem}\label{Xgroup}
With unitaries $u$, $v$, $v_z$ and $u_z$ as above, put 
$$
X_{z}:=({\rm Ad}v\circ\varphi)(u_{z})u_{z}^{*}v_{z}.
$$ 
Then $\{X_{z}\}_{z\in\T}$ is a unitary group in the center of $\lambda_{u}(\mathcal{F}_{n})'
\cap \mathcal{O}_{n}$, and we have 
$$
\gamma_{z}(v)=X_{z}u_{z}({\rm Ad}v\circ\varphi)(u_{z}^{*})v.
$$
\end{lem}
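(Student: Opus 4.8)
The plan is to abbreviate $\Phi:=\Ad v\circ\varphi$, which by the remarks preceding the lemma is an automorphism of $\lambda_{u}(\F_n)'\cap\OO_n$, and to record first some routine commutation facts. All the unitaries in sight, namely $u_z$, $v_z$ and $\Phi(u_z)$, lie in $\lambda_{u}(\F_n)'\cap\OO_n$, while $u_z,v_z\in\lambda_{u}(\F_n)'\cap\F_n$ with $u_z$ central there. Since $\{u_z\}$ and $\{v_z\}$ are unitary groups and $u_z$ is central in $\lambda_{u}(\F_n)'\cap\F_n$, the elements of $\{u_z\}\cup\{v_z\}$ mutually commute; moreover $\{\Phi(u_z)\}$ is again a unitary group because $\Phi$ is a homomorphism. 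These are the ingredients I would set out at the start.

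The key step, and the main obstacle, is to compute $\gamma_z\circ\Phi$ on $\lambda_{u}(\F_n)'\cap\OO_n$ in two different ways. On one hand, for $x\in\lambda_{u}(\F_n)'\cap\OO_n$ we have $\Phi(x)\in\lambda_{u}(\F_n)'\cap\OO_n$, so $\gamma_z(\Phi(x))=\Ad u_z(\Phi(x))=u_z\Phi(x)u_z^*$. On the other hand, using that $\gamma_z$ commutes with $\varphi$, that $\gamma_z(v)=v_zv$, and that $\gamma_z(x)=u_zxu_z^*$, a direct expansion of $\gamma_z(v\varphi(x)v^*)$ yields $\gamma_z(\Phi(x))=v_z\Phi(u_z)\Phi(x)\Phi(u_z^*)v_z^*$. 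Equating the two expressions for every $x$ and using that $\Phi$ maps onto $\lambda_{u}(\F_n)'\cap\OO_n$, I conclude that $\Phi(x)c_z=c_z\Phi(x)$ for all $x$, where $c_z:=\Phi(u_z^*)v_z^*u_z$; that is, $c_z$ is central in $\lambda_{u}(\F_n)'\cap\OO_n$. Extracting this central element from the two-way computation is the crux of the argument.

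It then remains to identify $X_z$ with $c_z^*$ and to verify the group law and the final displayed identity, all of which is bookkeeping resting on the commutation relations above. Writing $P=\Phi(u_z^*)$ and $Q=v_z^*u_z$, centrality of $c_z=PQ$ forces $QP=P^*(PQ)P=c_z$; since $QP=v_z^*u_z\Phi(u_z^*)=X_z^*$, this gives $X_z=c_z^*$, so $X_z$ is a central unitary, and the group property of $\{X_z\}$ follows from that of $\{c_z\}$, which I would check by expanding $c_{z_1}c_{z_2}$ and $c_{z_1z_2}$, rearranging the mutually commuting factors $u_{z_i},v_{z_i}$ and moving the central $c_{z_1}$ past $\Phi(u_{z_2}^*)$. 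Finally, centrality of $X_z$ (hence $X_zv_z=v_zX_z$) quickly yields $\Phi(u_z)v_z=v_z\Phi(u_z)$; substituting $X_z=\Phi(u_z)u_z^*v_z$ into $X_zu_z\Phi(u_z^*)v$ and using $u_z^*v_zu_z=v_z$ together with this last commutation collapses the right-hand side to $v_zv=\gamma_z(v)$, establishing the asserted identity. I expect no real difficulty beyond the two-way computation of $\gamma_z\circ\Phi$; the remaining manipulations are purely formal.
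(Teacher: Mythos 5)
Your proof is correct and follows essentially the same route as the paper: the two-way computation of $\gamma_z((\Ad v\circ\varphi)(x))$ (via $\Ad u_z$ on one side and via $\gamma_z(v)=v_zv$ on the other) is exactly the paper's key step, and your $c_z$ is just $X_z^*$. The remaining bookkeeping for the group law and the final identity also matches the paper's argument, so there is nothing to fix.
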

\begin{proof}
For each $x\in \lambda_{u}(\mathcal{F}_{n})'\cap \mathcal{O}_{n}$, we see that 
$$
u_{z}({\rm Ad}v\circ\varphi)(x)u_{z}^{*} = \gamma_z(v\varphi(x)v^*) = \gamma_z(v)\gamma_z(\varphi(x))
\gamma_z(v^*) = \gamma_{z}(v)\varphi(\gamma_{z}(x))\gamma_{z}(v)^{*}
$$
$$
= v_z v\varphi(u_z x u_z^*)v^* v_z^* = v_z v\varphi(u_z)v^*v\varphi(x)v^*v\varphi(u_z^*)v^*v_z^*  
$$
$$
= v_z\Ad(v\varphi(u_z)v^*)((\Ad v\circ\varphi)(x))v_z^*.  
$$
Hence, we have 
$$
\Ad(v_z^*u_z)((\Ad v\circ\varphi(x)) = \Ad((\Ad v\circ\varphi)(u_z))((\Ad v\circ\varphi)(x)). 
$$
Since ${\rm Ad}v\circ\varphi$ is an automorphism of $\lambda_{u}(\mathcal{F}_{n})'\cap \mathcal{O}_{n}$, 
this shows that 
\begin{equation}\label{oldlemma010}
\Ad(v_z^*u_z) = \Ad((\Ad v\circ\varphi)(u_z))
\;\; \text{on} \;\; \lambda_{u}(\mathcal{F}_{n})'\cap \mathcal{O}_{n}. 
\end{equation}
Consequently, $X_{z}$ belongs to the center of $\lambda_{u}(\mathcal{F}_{n})'\cap \mathcal{O}_{n}$. 

Now, $\{u_z\}_{z\in\T}$ and $\{v_z\}_{z\in\T}$ are commuting unitary groups, and both commute with 
$X_z$, by the above argument.  Therefore the unitary group $(\Ad v\circ\varphi)(u_z)=X_zu_zv_z^*$ 
commutes with both of them.  Consequently, $X_z$ being a product of three mutually commuting unitary groups 
itself is a unitary group. 

The final claim of the lemma now follows from the fact that $\gamma_z(v)=v_zv$. 
\end{proof}

Before proceeding further, we introduce the following notation. 
For $x\in \lambda_{u}(\mathcal{F}_{n})'\cap \mathcal{O}_{n}$ and $k\in\N$, we set 
\begin{equation}\label{xk}
x^{(k)}:=x({\rm Ad}v\circ\varphi)(x)({\rm Ad}v\circ\varphi)^{2}(x)\cdots({\rm Ad}v\circ\varphi)^{k-1}(x).
\end{equation}

\begin{lem}\label{vzk}
With unitaries $u$, $v$, $v_z$ and $u_z$ as above, and $x\in \lambda_{u}(\mathcal{F}_{n})'\cap \mathcal{O}_{n}$, for all $g\in\U(\OO_n)$, $z\in\T$ and $k\in\N$ we have the following identities. 
\begin{enumerate}
\item[(i)] $\gamma_{z}(v_{k})=v_{z}^{(k)}v_{k}$, 
\item[(ii)] $({\rm Ad}g\circ\varphi)^{k}(x)=g_{k}\varphi^{k}(x)g_{k}^{*}$, 
\item[(iii)] $v_{z}^{(k)}=X_{z}^{(k)}u_{z}({\rm Ad}v\circ\varphi)^{k}(u_{z}^{*})$, 
\item[(iv)] $(gv)_k = g^{(k)}v_k$.    
\end{enumerate}
\end{lem}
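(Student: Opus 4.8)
The plan is to prove the four identities in the order (ii), (iv), (i), (iii): the first two are straightforward inductions on $k$, and the remaining two are then quick consequences, with (iii) carrying the only genuine bookkeeping.

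First I would settle (ii) by induction on $k$, using the recursion $g_{k+1}=g\varphi(g_k)$ that follows immediately from the definition \eqref{uk}. The base case $k=1$ reads $(\Ad g\circ\varphi)(x)=g\varphi(x)g^*$, and for the inductive step one applies $\Ad g\circ\varphi$ to $g_k\varphi^k(x)g_k^*$ and recognizes $g\varphi(g_k)=g_{k+1}$. Next I would prove (iv), again by induction. Writing $(gv)_{k+1}=(gv)_k\varphi^k(gv)$ and $g^{(k+1)}=g^{(k)}(\Ad v\circ\varphi)^k(g)$, the key is to invoke (ii) with the unitary $v$ to replace $(\Ad v\circ\varphi)^k(g)$ by $v_k\varphi^k(g)v_k^*$; the factor $v_k^*$ then cancels against the $v_k$ arising from $v_{k+1}=v_k\varphi^k(v)$, which closes the induction.

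Identity (i) is then immediate. Since $\gamma_z$ is a $*$-homomorphism commuting with $\varphi$, it is multiplicative along the product defining $v_k$ and hence $\gamma_z(v_k)=(\gamma_z(v))_k=(v_zv)_k$; applying (iv) with $g=v_z$ yields $v_z^{(k)}v_k$, as claimed.

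The remaining identity (iii) is where the real work lies. Solving the defining relation $X_z=(\Ad v\circ\varphi)(u_z)u_z^*v_z$ for $v_z$, and using the mutual commutativity of $\{u_z\}$, $\{v_z\}$ and $\{X_z\}$ recorded in Lemma \ref{Xgroup}, gives $v_z=X_z\,u_z\,(\Ad v\circ\varphi)(u_z^*)$. Substituting this into $v_z^{(k)}=\prod_{j=0}^{k-1}(\Ad v\circ\varphi)^j(v_z)$ and distributing the homomorphism $(\Ad v\circ\varphi)^j$ produces, for each $j$, a triple $(\Ad v\circ\varphi)^j(X_z)\,(\Ad v\circ\varphi)^j(u_z)\,(\Ad v\circ\varphi)^{j+1}(u_z^*)$. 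The main obstacle is the noncommutative bookkeeping of these $k$ triples, which I expect to handle as follows: because $\Ad v\circ\varphi$ is an automorphism of $\rc$ preserving its center, each factor $(\Ad v\circ\varphi)^j(X_z)$ is again central, so all of them may be collected to the front, producing exactly $X_z^{(k)}$; the surviving factors then telescope, since for consecutive indices $(\Ad v\circ\varphi)^{j+1}(u_z^*)(\Ad v\circ\varphi)^{j+1}(u_z)=1$, leaving only the outer terms $u_z\,(\Ad v\circ\varphi)^k(u_z^*)$. This yields $v_z^{(k)}=X_z^{(k)}u_z(\Ad v\circ\varphi)^k(u_z^*)$, completing the proof.
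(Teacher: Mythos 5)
Your proposal is correct, and parts (ii) and (iv) follow the paper's proof verbatim (induction on $k$ using the recursions $g_{k+1}=g\varphi(g_k)$ and $(gv)_{k+1}=(gv)_k\varphi^k(gv)$). For (i) and (iii) you take a genuinely different, non-inductive route. The paper proves (i) by induction (base case from Theorem \ref{main1}, inductive step via (ii)), whereas you observe that $\gamma_z$ commutes with $\varphi$, hence is multiplicative along the product defining $v_k$, giving $\gamma_z(v_k)=(\gamma_z(v))_k=(v_zv)_k$, and then invoke (iv) with $g=v_z$; this is slicker and makes the logical dependence (i) $\Leftarrow$ (iv) $\Leftarrow$ (ii) explicit. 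For (iii) the paper again inducts, using at each step the rewriting $u_z^*v_z=X_z(\Ad v\circ\varphi)(u_z^*)$; you instead solve the defining relation for $v_z$, expand $v_z^{(k)}$ as a product of $k$ triples, pull the central factors $(\Ad v\circ\varphi)^j(X_z)$ to the front to assemble $X_z^{(k)}$, and telescope the remaining $u_z$-factors. This is the same calculation unrolled in closed form rather than one step at a time; it makes transparent \emph{why} the formula holds, at the cost of having to justify carefully that every factor lies in $\rc$ so that the central elements commute past them (which you do note, via $u_z\in\lambda_u(\F_n)'\cap\F_n\subseteq\rc$ and $\Ad v\circ\varphi$ being an automorphism of $\rc$). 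Either organization is fully rigorous.
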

\begin{proof}
In all three cases, we proceed by induction on $k$. 

\smallskip\noindent
Ad (i). Case $k=1$ is the identity $\gamma_z(v_1)=\gamma_z(v)=
v_zv=v_z^{(1)}v_1$ from Theorem \ref{main1}. For the inductive step, we calculate
$$
\gamma_z(v_{k+1}) = \gamma_z(v_k\varphi^k(v)) = \gamma_z(v_k)\varphi^k(\gamma_z(v)) = 
v_z^{(k)}v_k\varphi^k(v_z)v_k^*v_k\varphi^k(v) = v_z^{(k+1)}v_{k+1}. 
$$
In this calculation we used identity (ii) of the present lemma, whose proof does not depend on (i). 

\noindent
Ad (ii). Case $k=1$ is clear. For the inductive step, we have
$$
(\Ad g\circ\varphi)^{k+1} = (\Ad g\circ\varphi)(g_k\varphi^k(x)g_k^*) = g\varphi(g_k)\varphi^{k+1}(x)
\varphi(g_k^*)g^* = g_{k+1}\varphi^{k+1}(x)g_{k+1}^*. 
$$

\noindent
Ad (iii). Case $k=1$ is clear. For the inductive step, we see that
$$
v_z^{(k+1)} = v_z^{(k)}(\Ad v\circ\varphi)^k(v_z) = X_{z}^{(k)}u_{z}({\rm Ad}v\circ\varphi)^{k}(u_{z}^{*}) 
(\Ad v\circ\varphi)^k(v_z) 
$$
$$
=   X_{z}^{(k)}u_{z}({\rm Ad}v\circ\varphi)^{k}(u_{z}^{*}v_z) = 
X_{z}^{(k)}u_{z}({\rm Ad}v\circ\varphi)^{k}(X_z(\Ad v\circ\varphi)(u_z^*)) 
$$
$$
= X_{z}^{(k)}u_{z}({\rm Ad}v\circ\varphi)^{k}(X_z)(\Ad v\circ\varphi)^{k+1}(u_z^*) = 
X_{z}^{(k+1)}u_{z}({\rm Ad}v\circ\varphi)^{k+1}(u_{z}^{*}).  
$$

\noindent
Ad (iv). Case $k=1$ is clear. For the inductive step, we calculate using part (ii) above, 
$$
(gv)_{k+1} = (gv)_k\varphi^k(gv) = g^{(k)}v_k\varphi^k(gv) = g^{(k)}(v_k\varphi^k(g)v_k^*)v_k
\varphi^k(v) = g^{(k+1)}v_{k+1},   
$$
and this completes the proof. 
\end{proof}

The following lemma provides a key step in the proof of our second main result, Theorem \ref{main2}, below. 
We continue keeping the notation of Theorem \ref{main1}. Here we remark that since $v=w^*u$, 
$w\in\lambda_u(\F_n)'\cap\OO_n$, and $\Ad u\circ\varphi$ is an automorphism of $\lambda_u(\F_n)'\cap
\OO_n$, we see that $\Ad v\circ\varphi = \Ad w^* \circ(\Ad u\circ\varphi)$ is an automorphism of $\lambda_u(\F_n)'\cap\OO_n$ as well. We also note that for each positive integer $k$, 
$\{X_{z}^{(k)}\}_{z\in{\Bbb T}}$ is a unitary group in the center of $\lambda_u(\F_n)'\cap\OO_n$. 

\begin{lem}\label{Uk}
With unitaries $u$, $v$, $v_z$ and $u_z$ as above, 
there exist a positive integer $k$ and a unitary $U\in \lambda_{u}(\mathcal{F}_{n})'\cap \mathcal{O}_{n}$ 
such that  
$$
(\Ad v\circ\varphi)^k(x) = \Ad U(x) \;\;\; \text{for all} \; x\in\lambda_u(\F_n)'\cap\OO_n. 
$$
Then $X_z^{(k)}=1$. Furthermore, for such $U$ and $k$, we have $U^{*}v_{k}\in \mathcal{F}_{n}$.
\end{lem}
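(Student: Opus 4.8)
The plan is to prove the three assertions in turn, reducing the second and third to the first. For the existence of $k$ and $U$, I would use that $\rc$ is finite dimensional, so $\operatorname{Aut}(\rc)$ is a compact Lie group with finite outer automorphism group $\operatorname{Out}(\rc)$ (permutations of isomorphic matrix blocks). Since $\Ad v\circ\varphi$ is an automorphism of $\rc$, as noted just before the lemma, its class in $\operatorname{Out}(\rc)$ has finite order; taking $k$ to be that order makes $(\Ad v\circ\varphi)^k$ inner, hence equal to $\Ad U$ for a unitary $U\in\rc$ (inner automorphisms of a unital finite-dimensional algebra are implemented inside it). It is also worth recording, via Lemma \ref{vzk}(ii) with $g=v$, that $\Ad U=\Ad v_k\circ\varphi^k$ on $\rc$, i.e. $\varphi^k(x)=V^*xV$ on $\rc$ with $V:=U^*v_k$; this exhibits the role of $U^*v_k$.

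Next I would isolate the central computation showing that $U^*v_k\in\F_n$ is \emph{equivalent} to $X_z^{(k)}=1$. Using $\gamma_z|_{\rc}=\Ad u_z$ and $U\in\rc$ we have $\gamma_z(U)=u_zUu_z^*$. Combining Lemma \ref{vzk}(i), $\gamma_z(v_k)=v_z^{(k)}v_k$, with Lemma \ref{vzk}(iii) and $(\Ad v\circ\varphi)^k(u_z^*)=Uu_z^*U^*$ gives $v_z^{(k)}=X_z^{(k)}u_zUu_z^*U^*=X_z^{(k)}\gamma_z(U)U^*$. Substituting these and using that $X_z^{(k)}$ is central in $\rc$, a short cancellation yields $\gamma_z(U^*v_k)=X_z^{(k)}(U^*v_k)$. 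Hence $U^*v_k$ is gauge invariant exactly when $X_z^{(k)}=1$, so both remaining assertions follow once $X_z^{(k)}=1$ is proved. I would also note at this point that $\{X_z^{(k)}\}$ is $\Ad v\circ\varphi$-invariant: since $X_z$ is central, $(\Ad v\circ\varphi)^k(X_z)=\Ad U(X_z)=X_z$, whence $(\Ad v\circ\varphi)(X_z^{(k)})=X_z^{(k)}$.

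The main obstacle is therefore to prove $X_z^{(k)}=1$. Here I would pass to the finite-dimensional structure of $\rc$. Let $\{e_l\}$ be its minimal central projections; since $\gamma_z=\Ad u_z$ fixes the centre (which lies in $\F_n$), the central unitary group splits as $X_z=\sum_l z^{a_l}e_l$, and $\Ad v\circ\varphi$ permutes the $e_l$ by a finite-order permutation $\sigma$. The invariance found above forces the exponents of $X_z^{(k)}$ to be constant on $\sigma$-orbits, and a determinant computation on each block identifies $X_z^{(k)}=1$ with the vanishing, around every $\sigma$-orbit $O$, of $\sum_{l\in O}h_l$, where $z^{h_l}=\det\!\big(u\gamma_z(u^*)e_l\big)$ records the winding number of the gauge $1$-cocycle $c_z:=u\gamma_z(u^*)\in\rc$ on the block $e_l$. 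Equivalently, writing $A:=u\delta(u^*)\in\rc$ for the generator of $c_z$ (with $\delta$ the generator of $\gamma$), one must show $\operatorname{tr}(Ae_O)=0$ for every $\Ad v\circ\varphi$-invariant central projection $e_O=\sum_{l\in O}e_l$.

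I expect this last vanishing to be the crux. The one genuine simplification I see is that the factorization $u=wv$ gives $A=w\big(v\delta(v^*)\big)w^*+w\delta(w^*)$, and since $\delta$ is \emph{inner} on $\rc$ while $w,U\in\rc$, the trace against a central projection of the $w$-term (and likewise of the $U$-term that reappears through $V=U^*v_k$) cancels by the trace property; so the problem reduces to the group $\{v_z\}$ alone, i.e. to $\operatorname{tr}\!\big(\delta(v)v^*\,e_O\big)$. Together with the telescoping identity $\sum_{i=0}^{k-1}(\Ad v\circ\varphi)^i\!\big(\delta(v)v^*\big)=\delta(v_k)v_k^*$ and the $\Ad v\circ\varphi$-invariance of both $e_O$ and a suitable trace, this is where the finite-dimensionality and the compatibility $\delta\circ\varphi=\varphi\circ\delta$ must be leveraged to force the orbit sums to cancel. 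The delicate point is precisely that the individual block windings $h_l$ may be nonzero—this is exactly what allows $X_z\neq1$, so that $u$ need not factor through $\F_n$—while their sum around each orbit is forced to vanish, giving $X_z^{(k)}=1$ and hence $U^*v_k\in\F_n$.
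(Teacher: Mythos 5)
Your first two steps track the paper exactly: finite-dimensionality of $\rc$ gives a $k$ with $(\Ad v\circ\varphi)^k$ inner, implemented by some $U\in\rc$, and the computation combining Lemma \ref{vzk}(i),(iii) with $(\Ad v\circ\varphi)^k(u_z^*)=Uu_z^*U^*$ and $\gamma_z(U)=u_zUu_z^*$ yields $\gamma_z(U^*v_k)=X_z^{(k)}U^*v_k$, correctly reducing everything to showing $X_z^{(k)}=1$. But that reduction is where your proof stops being a proof. You reformulate $X_z^{(k)}=1$ as the vanishing of certain orbit sums of winding numbers $h_l$ of the cocycle $u\gamma_z(u^*)$ on the central blocks, and then say this vanishing ``must be leveraged'' from finite-dimensionality and $\delta\circ\varphi=\varphi\circ\delta$ --- you never actually prove it. Worse, the mechanism you propose (traces and determinants computed entirely inside the finite-dimensional algebra $\rc$, plus the telescoping identity for $\delta(v_k)v_k^*$) cannot by itself detect the obstruction: after your own reductions you are left needing $\operatorname{tr}(\delta(v)v^*e_O)=0$, and nothing internal to $\rc$ forces this. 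The vanishing is a consequence of the \emph{global} structure of $\F_n$, specifically of its unique trace $\tau$ and the scaling $\tau(S_1^{m}x(S_1^*)^{m})=n^{-m}\tau(x)$.

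Concretely, the paper's argument is: write $X_z^{(k)}=\sum_i z^{k_i}p_i$ over central projections, note $(\Ad v\circ\varphi)^k(p_i)=p_i$ hence $v_k^*p_iv_k=\varphi^k(p_i)$ by Lemma \ref{vzk}(ii), and, assuming $k_i>0$, form $K:=p_iU^*v_k(S_1^*)^{k_i}$. The identity $\gamma_z(U^*v_k)=X_z^{(k)}U^*v_k$ shows $K$ is gauge-invariant, so $K\in\F_n$; then $KK^*=p_i$ while $K^*K=\varphi^{k+k_i}(p_i)S_1^{k_i}(S_1^*)^{k_i}$ has $\tau(K^*K)=n^{-k_i}\tau(p_i)<\tau(p_i)=\tau(KK^*)$, contradicting traciality of $\tau$ on $\F_n$ (symmetrically for $k_i<0$). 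This partial-isometry/trace-comparison argument --- the actual content of the lemma --- is absent from your proposal, so the proof is incomplete at its crux.
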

\begin{proof}
Since $\Ad v\circ\varphi$ is an automorphism of a finite dimensional $C^*$-algebra $\lambda_u(\F_n)'\cap\OO_n$, 
its restricts to the center has finite order. Thus there exists a positive integer $k$ and a unitary 
$U\in \lambda_{u}(\mathcal{F}_{n})'\cap \mathcal{O}_{n}$ such that $(\Ad v\circ\varphi)^k=\Ad U$ on 
$\lambda_u(\F_n)'\cap\OO_n$. We claim that $U^{*}v_{k}\in \mathcal{F}_{n}$. 

Indeed, by Lemma \ref{vzk}, for all $z\in\T$ we have 
$$
\gamma_{z}(v_{k})=v_{z}^{(k)}v_{k}=X_{z}^{(k)}u_{z}({\rm Ad}v\circ\varphi)^{k}(u_{z}^{*})
v_{k}=X_{z}^{(k)}u_{z}Uu_{z}^{*}U^{*}v_{k}=X_{z}^{(k)}\gamma_{z}(U)U^{*}v_{k}, 
$$
and this yields 
\begin{equation}\label{gU}
\gamma_{z}(U^{*}v_{k})=X_{z}^{(k)}U^{*}v_{k}.  
\end{equation}
Since $\{X_{z}^{(k)}\}_{z\in{\Bbb T}}$ is a unitary group in the center of $\lambda_u(\F_n)'\cap\OO_n$, 
there exists a partition of unity $1=\sum_i p_i$ in ${\mathcal Z}(\lambda_u(\F_n)'\cap\OO_n)$ and integers 
$k_i$ such that 
$$
X_{z}^{(k)}=\sum_{i}z^{k_{i}}p_{i}.
$$ 
We have $({\rm Ad}v\circ\varphi)^{k}(p_i)=Up_iU^*=p_i$ for all $i$. Combining this with part (ii) of Lemma \ref{vzk}, we get 
\begin{equation}\label{vp}
v_{k}^{*}p_{i}v_{k}=\varphi^{k}(p_{i}). 
\end{equation}
We want to show that $k_i=0$ for all $i$. Suppose for a moment this is not the case and let $k_i>0$ for 
some $i$. We set 
$K:=p_{i}U^{*}v_{k}(S_{1}^{*})^{k_{i}}$. Since $p_i$ being in $\Z(\rc)$ belongs to $\F_n$ as well, 
it follows from identity (\ref{gU}) above that  
$$
\gamma_z(K)=\gamma_z(p_iU^*v_k(S_1^*)^{k_i}) = p_iX_z^{(k)}U^*v_k\gamma_z((S_1^*)^{k_i}) = 
z^{k_i}p_iU^*v_k(z^{-k_i}(S_1^*)^{k_i}) = K. 
$$ 
Hence $K$ belongs to $\mathcal{F}_{n}$. We have $KK^*=p_i$. On the other hand, using identity 
(\ref{vp}) we get 
$$
K^{*}K=S_{1}^{k_{i}}v_{k}^{*}p_{i}v_{k}(S_{1}^{*})^{k_{i}}
=S_{1}^{k_{i}}\varphi^{k}(p_{i})(S_{1}^{*})^{k_{i}}
=\varphi^{k+k_{i}}(p_{i})S_{1}^{k_{i}}(S_{1}^{*})^{k_{i}}.
$$ 
It easily follows that $\tau(KK^{*})>\tau(K^{*}K)$, which is a contradiction. A similar argument applies 
in the case $k_i<0$. Hence $k_i=0$ for all $i$ and thus $X_z^{(k)}=1$. Now, identity (\ref{gU}) implies that 
$U^*v_k$ is fixed by the gauge action and hence belongs to $\F_n$. 
\end{proof}

Now, we are ready to prove the second main result of this paper. 

\begin{thm}\label{main2}
Let $u\in\U(\OO_n)$ be such that $\lambda_{u}(\mathcal{F}_{n})\subseteq \mathcal{F}_{n}$ and 
${\rm dim}\lambda_{u}(\mathcal{F}_{n})'\cap \mathcal{F}_{n}<\infty$. Then 
there exist a positive integer $k$ and unitaries $W\in \lambda_{u}(\mathcal{F}_{n})'\cap 
\mathcal{O}_{n}$ and $V\in \mathcal{F}_{n}$ such that $u_{k}=WV$. 
\end{thm}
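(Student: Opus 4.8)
The plan is to assemble the desired factorization of $u_k$ directly from Theorem \ref{main1} and Lemma \ref{Uk}, so that all the genuinely hard work will already have been done. First I would invoke Theorem \ref{main1} to write $u = wv$, where $w\in\lambda_u(\F_n)'\cap\OO_n$ is a unitary, $v$ is a unitary in $\OO_n$, and $\gamma_z(v)=v_z v$ for the unitary group $\{v_z\}_{z\in\T}\subseteq\lambda_u(\F_n)'\cap\F_n$. The reason for passing to the power $u_k$ rather than working with $u$ itself is that the twisted $k$-th product of the factored unitary $wv$ splits cleanly.

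Precisely, I would apply part (iv) of Lemma \ref{vzk} with $g=w$, which gives
$$
u_k = (wv)_k = w^{(k)}v_k,
$$
where $w^{(k)} = w\,(\Ad v\circ\varphi)(w)\cdots(\Ad v\circ\varphi)^{k-1}(w)$ in the notation of \eqref{xk}. Since $\Ad v\circ\varphi$ is an automorphism of the finite dimensional algebra $\lambda_u(\F_n)'\cap\OO_n$, as recorded just before Lemma \ref{Uk}, and this algebra is closed under multiplication, every factor of $w^{(k)}$, and hence $w^{(k)}$ itself, lies in $\lambda_u(\F_n)'\cap\OO_n$; being a product of unitaries it is a unitary there.

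Next I would take the positive integer $k$ and the unitary $U\in\lambda_u(\F_n)'\cap\OO_n$ provided by Lemma \ref{Uk}, for which $U^*v_k\in\F_n$. Writing $v_k = U(U^*v_k)$ and substituting into the previous display yields
$$
u_k = w^{(k)}U\,(U^*v_k).
$$
I then set $W:=w^{(k)}U$ and $V:=U^*v_k$. Then $W$ is a unitary in $\lambda_u(\F_n)'\cap\OO_n$, being a product of two unitaries of that algebra, while $V$ is a unitary in $\F_n$ by Lemma \ref{Uk}, and $u_k=WV$, which is exactly the assertion.

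The main obstacle is not in this concluding assembly but has already been overcome in Lemma \ref{Uk}: the essential input is that one can simultaneously choose $k$ so that $(\Ad v\circ\varphi)^k$ is inner on the relative commutant \emph{and} force $X_z^{(k)}=1$, which is what makes $U^*v_k$ gauge invariant and hence places it in $\F_n$. Granting that, the only thing left to verify here is the bookkeeping that the remaining cocycle-type factors land in the correct subalgebras, and this is precisely what the multiplicativity formula of Lemma \ref{vzk}(iv) guarantees.
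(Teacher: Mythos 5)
Your proof is correct and follows exactly the same route as the paper: factor $u=wv$ via Theorem \ref{main1}, apply Lemma \ref{vzk}(iv) to get $u_k=w^{(k)}v_k$, and then insert the unitary $U$ from Lemma \ref{Uk} to set $W:=w^{(k)}U$ and $V:=U^*v_k$. The only difference is that you spell out why $w^{(k)}$ lies in $\lambda_u(\F_n)'\cap\OO_n$, which the paper leaves implicit.
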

\begin{proof}
By Theorem \ref{main1} and Lemma \ref{Uk}, there exist unitaries $w,U\in 
\lambda_{u}(\mathcal{F}_{n})'\cap \mathcal{O}_{n}$, a unitary group $\{v_{z}\}_{z\in\T}$ in 
$\lambda_{u}(\mathcal{F}_{n})'\cap \mathcal{F}_{n}$ and a positive integer $k$ satisfying 
$u=wv$, $\gamma_{z}(v)=v_{z}v$, $U^{*}v_{k}\in \mathcal{F}_{n}$.
By part (iv) of Lemma \ref{vzk}, we have $w^{(k)}v_{k} = u_k$. Thus 
to complete the proof, it suffices to put $W:=w^{(k)}U$ and $V:=U^*v_k$. 
\end{proof}

It was observed in \cite{CRS} (just above Remark 4.4) that if $\lambda_u(\F_n)\subseteq\F_n$ and 
$\rc=\C 1$ then $u\in\F_n$. The following corollary gives a sharp strengthening of that result.

\begin{cor}\label{inner}
Let $u$ be a unitary in $\OO_n$. 
If $\lambda_{u}(\mathcal{F}_{n})\subseteq \mathcal{F}_{n}$,  
${\rm dim}\lambda_{u}(\mathcal{F}_{n})'\cap \mathcal{F}_{n}<\infty$  and 
the automorphism $\Ad u\circ\varphi$ of $\rc$ is inner, then 
there exist a unitary $w\in\rc$ and a unitary $v\in\F_n$ such that $u=wv$, and hence also  
$\lambda_u|_{\F_n} = \lambda_v|_{\F_n}$. 
In particular, this is the case whenever $\lambda_u(\F_n)'\cap\OO_n$ is a factor. 
\end{cor}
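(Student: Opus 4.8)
The plan is to deduce Corollary \ref{inner} directly from Theorem \ref{main2}, using the innerness hypothesis to reduce the exponent $k$ down to $1$. By Theorem \ref{main2} we already have a positive integer $k$ and unitaries $W\in\rc$, $V\in\F_n$ with $u_k = WV$. The point of the extra hypothesis is to upgrade this factorization of $u_k$ into a factorization of $u$ itself. First I would observe that if $\Ad u\circ\varphi$ is inner on $\rc$, then since $v=w^*u$ with $w\in\rc$, the automorphism $\Ad v\circ\varphi = \Ad w^*\circ(\Ad u\circ\varphi)$ is also inner on $\rc$. This means I may take $k=1$ in Lemma \ref{Uk}: there is a unitary $U\in\rc$ with $\Ad v\circ\varphi = \Ad U$ on $\rc$, and Lemma \ref{Uk} then gives directly that $U^*v_1 = U^*v\in\F_n$ and that $X_z^{(1)}=X_z=1$.

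With $k=1$ in hand, the factorization becomes transparent. Following the recipe in the proof of Theorem \ref{main2} with $k=1$, I would set $W:=w^{(1)}U = wU$ and $v':=U^*v$. Then $W$ is a product of two unitaries in $\rc$, hence a unitary in $\rc$, while $v' = U^*v\in\F_n$ is a unitary. Part (iv) of Lemma \ref{vzk} with $k=1$ gives $(wv)_1 = w^{(1)}v_1$, that is $u = wv$, so $u = wU U^*v = W v'$. Renaming $v:=v'$, this is exactly the desired decomposition $u=wv$ with $w\in\rc$ and $v\in\U(\F_n)$. The identity $\lambda_u|_{\F_n}=\lambda_v|_{\F_n}$ is then immediate from \cite[Proposition 2.1]{CRS}, as already noted after Theorem \ref{main1}, since $wv^{-1}=W\in\rc$.

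For the final assertion, I would argue that the factor case is a special instance of the innerness hypothesis. If $\rc$ is a factor, then being finite dimensional it is a full matrix algebra, and every automorphism of a full matrix algebra is inner. In particular $\Ad u\circ\varphi$, which is an automorphism of $\rc$ by the remarks preceding Lemma \ref{Uk}, is inner, and the first part of the corollary applies.

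I do not expect a serious obstacle here, since the heavy lifting is done by Theorem \ref{main2} and Lemma \ref{Uk}; the only thing to be careful about is the bookkeeping in the reduction to $k=1$. The mild subtlety is to confirm that the hypothesis is correctly phrased in terms of $\Ad u\circ\varphi$ rather than $\Ad v\circ\varphi$, and that passing between the two (via the unitary $w\in\rc$) preserves innerness; this is routine because $w$ lies in the algebra on which the automorphisms act. One should also verify that Lemma \ref{Uk} indeed permits the choice $k=1$ whenever $\Ad v\circ\varphi$ is inner --- its proof produces $k$ as the order of the induced automorphism of the center, and innerness forces that induced automorphism to be trivial, so $k=1$ is admissible.
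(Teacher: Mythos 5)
Your proof is correct and follows exactly the route the paper intends (the corollary is stated without proof, as an immediate consequence of the machinery of Theorem \ref{main1}, Lemma \ref{Uk} and the proof of Theorem \ref{main2} specialized to $k=1$): innerness of $\Ad u\circ\varphi$ on $\rc$ transfers to $\Ad v\circ\varphi$ via $w\in\rc$, Lemma \ref{Uk} then applies with $k=1$ to give $U^*v\in\F_n$, and $u=(wU)(U^*v)$ is the desired factorization. The treatment of the factor case via finite-dimensionality (from the standing assumption and \cite{H}) and Skolem--Noether is also correct.
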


\begin{remark}\label{equivalentinner} \rm
The assumption in Corollary \ref{inner} above that the automorphism $\Ad u\circ\varphi$ of $\rc$ be inner, 
is equivalent to demanding existence of a unitary $g$ in the relative commutant $\rc$ such that 
$$ \lambda_{gu}(\F_n)'\cap\OO_n = \lambda_{gu}(\OO_n)'\cap\OO_n. $$
Indeed, if $\Ad u\circ\varphi$ is inner then $\Ad gu\circ\varphi =\id$ for a suitable unitary $g$ in $\rc$. Hence 
$\rc = \lambda_{gu}(\F_n)'\cap\OO_n = \lambda_{gu}(\OO_n)'\cap\OO_n$. Conversely, if 
$\lambda_{gu}(\F_n)'\cap\OO_n = \lambda_{gu}(\OO_n)'\cap\OO_n$ then $\Ad gu\circ\varphi=\id$ 
on $\rc = \lambda_{gu}(\F_n)'\cap\OO_n$, and hence $\Ad u\circ\varphi$ is inner. 
\hfill$\Box$
\end{remark}

\begin{remark}\label{nonouter} \rm
We remark that the implication in Corollary \ref{inner} above cannot be reversed. In fact, there exist 
unitaries $u\in\F_n$ such that $\rc$ is finite dimensional and the automorphism $\Ad u\circ\varphi$ 
is outer on $\rc$. For example, take 
$$ u = S_{22}S_{11}^* + S_{12}S_{22}^* + S_{11}S_{12}^* + P_{21}, $$ 
a permutative unitary in $\F_2$.  Then $\Ad u\circ\varphi$ is outer on $\lambda_u(\F_2)'\cap\OO_2$. 
For otherwise let $h$ be a unitary in $\lambda_u(\F_2)'\cap\OO_2$ such that $\Ad u\circ\varphi = \Ad h$ on $\lambda_u(\F_2)'\cap\OO_2$. Then $\Ad u\circ\varphi(h)=h$ and thus $h\in\lambda_u(\OO_2)'\cap\OO_2$. 
But it can be shown 
that $\lambda_u$ is irreducible on $\OO_2$ (e.g., see \cite{CSJMP}, where this endomorphism is 
denoted $\rho_{142}$), and hence $\lambda_u(\OO_2)'\cap\OO_2=\C 1$. Thus $h$ is a scalar and 
consequently $\Ad u\circ\varphi$ is identity on $\lambda_u(\F_2)'\cap\OO_2$. This however is not 
the case, since one can calculate directly that $\Ad u\circ\varphi$ permutes $P_1$ and $P_2$, 
and both these projections are in $\lambda_u(\F_2)'\cap\OO_2$. 
\hfill$\Box$
\end{remark}

We want to elaborate a little bit the statement of Theorem \ref{main2} above. 
We continue keeping our standing assumption (\ref{assumption}). 

\begin{lem}\label{masa}
Let $\alpha$ be an automorphism of $\lambda_u(\F_n)'\cap\OO_n$ and 
let $k\in N$ be such that $\alpha^k$ acts trivially on $\Z(\rc)$. 
Then there exists a MASA $D$ of $\lambda_u(\F_n)'\cap\F_n$ and a unitary $g$ in $\rc$ such that 
\begin{enumerate}
\item[(i)] $(\Ad g\circ\alpha)^k =\id$, and 
\item[(ii)] $(\Ad g\circ\alpha)(D)=D$. 
\end{enumerate}
\end{lem}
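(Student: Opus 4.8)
The plan is to argue entirely inside the finite dimensional $C^*$-algebra $A:=\rc$, using the three bullet-point consequences of the standing assumption recorded above, and to build $g$ and $D$ by straightening $\alpha$ one central orbit at a time. Write $B:=\frc$. By the third bullet, $A$ contains a MASA $D$ consisting of projections lying in $B$; since $D\subseteq B$ is already maximal abelian in the larger algebra $A$, it is a fortiori maximal abelian in $B$, so it is an admissible choice for the conclusion. It therefore suffices to produce a unitary $g\in\U(A)$ with $(\Ad g\circ\alpha)^k=\id$ and $(\Ad g\circ\alpha)(D)=D$ for this particular $D$. Since $\alpha^k$ acts trivially on $\Z(A)$, the automorphism $\alpha$ permutes the minimal projections of $\Z(A)$ in cycles whose lengths divide $k$. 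Each such cycle spans an $\alpha$-invariant ideal, and both the construction of $g$ and the verification of (i)--(ii) decompose over these ideals; so I would fix one orbit $z_0,\dots,z_{m-1}$ with $\alpha(z_i)=z_{i+1}$ (indices mod $m$, $m\mid k$) and work in the corners $z_iAz_i\cong M_d$, whose diagonals $z_iDz_i$ are MASAs of these matrix blocks.

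First I would arrange that the modified automorphism preserves $D$ and shifts it as simply as possible. On the orbit $\alpha$ restricts to isomorphisms $z_iAz_i\to z_{i+1}Az_{i+1}$, so $\alpha(z_iDz_i)$ and $z_{i+1}Dz_{i+1}$ are two MASAs of the same matrix block and hence are conjugate by a unitary $g_{i+1}\in\U(z_{i+1}Az_{i+1})$; moreover $g_{i+1}$ can be chosen to realise any prescribed bijection between their minimal projections. Assembling these block unitaries into a single unitary $g_1\in\U(A)$, I would choose them so that $\beta_1:=\Ad g_1\circ\alpha$ carries each minimal projection of $D$ in block $i$ to the corresponding one in block $i+1$. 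Then $\beta_1(D)=D$ with $\beta_1|_D$ a pure cyclic shift of order $m$ on each orbit; in particular $\beta_1^k|_D=\id$.

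It remains to remove the residual inner part without disturbing $D$. Since $\beta_1^k$ fixes $D$ pointwise and preserves each block, on each block it is implemented by a diagonal unitary, so $\beta_1^k=\Ad\delta$ for some $\delta\in\U(D)$. I would then seek a unitary $c\in\U(D)$ and set $\beta:=\Ad c\circ\beta_1$: because $c$ centralises $D$, the automorphism $\beta$ still preserves $D$ with the same shift action, so (ii) persists and $c$ reintroduces no off-diagonal defect. A direct computation gives $\beta^k=\Ad\big(c\,\beta_1(c)\cdots\beta_1^{k-1}(c)\,\delta\big)$, with the argument again a unitary in $D$. Since $\beta_1$ cyclically shifts the diagonal of each orbit and diagonal unitaries commute, the block-$0$ component of $c\,\beta_1(c)\cdots\beta_1^{k-1}(c)$ equals $\big(\prod_{i=0}^{m-1}c_i\big)^{k/m}$, where $c_i$ is the block-$i$ component of $c$. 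Using that $w\mapsto w^{k/m}$ is surjective on $\T$ (divisibility of the circle), I can solve for the $c_i$ so that this product cancels $\delta$ up to a scalar on block $0$, whence $\beta^k=\id$ there, and then on the whole orbit by conjugating with the shift $\beta$. Carrying this out on every orbit and putting $g:=c\,g_1$ completes the construction.

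The step I expect to be the main obstacle is the simultaneous satisfaction of (i) and (ii): every adjustment used to force $\beta^k=\id$ must be taken inside the normaliser — here in fact the centraliser — of $D$, so that it does not spoil invariance of the MASA, and one must check that the off-diagonal straightening of the first stage genuinely leaves only a diagonal residual so that the two stages do not interfere. The two hypotheses are precisely what make this possible: triviality of $\alpha^k$ on $\Z(A)$ forces the orbit lengths to divide $k$, so the permutation defect can be cleared in the first stage and the block-diagonal cocycle has the right period, while divisibility of $\T$ guarantees solvability of the abelian cocycle equation that clears the remaining diagonal holonomy.
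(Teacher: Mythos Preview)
Your argument is correct, but it takes a longer route than the paper's. The paper also reduces to a single $\alpha$-orbit $p_1,\dots,p_l$ of minimal central projections, but then it chooses full systems of matrix units $\{e_{r,s}^{(i)}\}$ in each block $p_i(\rc)$ with the diagonals $e_{r,r}^{(i)}$ lying in $\frc$, and picks $g_i\in p_{i+1}(\rc)$ so that $\Ad g_i\circ\alpha$ sends $e_{r,s}^{(i)}$ to $e_{r,s}^{(i+1)}$ for \emph{all} $r,s$. With $g=\sum_i g_i$ and $D=\operatorname{span}\{e_{r,r}^{(i)}\}$, the map $\Ad g\circ\alpha$ cyclically permutes the entire matrix unit systems, so $(\Ad g\circ\alpha)^l=\id$ on the nose and invariance of $D$ is immediate; since $l\mid k$, condition~(i) follows without any second-stage correction.

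The difference is that you only align the diagonals $z_iDz_i$ in the first stage, which leaves a diagonal holonomy $\delta$ after $k$ steps that you then have to kill by solving a cocycle equation in $\U(D)$. That works, and your transport-by-$\beta$ argument to pass from block~$0$ to the other blocks is fine. But the paper's observation that one may match the \emph{off-diagonal} matrix units as well (any two matrix unit systems of $M_d$ with prescribed diagonals are conjugate by a diagonal unitary composed with a permutation) eliminates the residual $\delta$ entirely, so no abelian cohomology or divisibility of $\T$ is needed. Your approach would be the natural one if the blocks were not full matrix algebras; here the extra rigidity of $M_d$ makes the one-step argument available.
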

\begin{proof}
Automorphism $\alpha$ permutes the finitely many minimal central projections of $\rc$. Write this 
permutation as a product of disjoint cycles. Clearly, it suffices to prove the lemma for each cycle 
separately. Thus we may simply assume that $\alpha$ acts transitively on minimal projections $p_1,p_2,\ldots,p_l$ 
in $\Z(\rc)$, so that $\alpha(p_i)=p_{i+1}$, with $p_{l+1}=p_1$. Let $\{e_{r,s}^{(i)}\}$ be 
matrix units of the full matrix algebra $p_i(\rc)$, such that all $e_{r,r}^{(i)}$ are in $\lambda_u(\F_n)'\cap\F_n$. 
Then $D:=\span\{e_{r,r}^{(i)}\}$ is a MASA in $\lambda_u(\F_n)'\cap\F_n$. Since 
$p_i(\rc) \cong p_{i+1}(\rc)$, we can find a unitary $g_i\in p_{i+1}(\rc)$ such that $(\Ad g_i\circ\alpha)
(e_{r,s}^{(i)}) =  e_{r,s}^{(i+1)}$. Setting $g:=\sum_{i=1}^l g_i$ we obtain the desired result. 
\end{proof}

\begin{lem}\label{avk}
Let $u\in\U(\OO_n)$ be such that $\lambda_{u}(\mathcal{F}_{n})\subseteq \mathcal{F}_{n}$ and 
${\rm dim}\lambda_{u}(\mathcal{F}_{n})'\cap \mathcal{F}_{n}<\infty$. Then 
there exist a positive integer $k$, a unitary $g\in \lambda_{u}(\mathcal{F}_{n})'\cap \mathcal{O}_{n}$, 
and a unitary group $\{d_z\}_{z\in\T}\subseteq \lambda_{u}(\mathcal{F}_{n})'\cap \mathcal{F}_{n}$ 
such that $(gv)_{k}\in \mathcal{F}_{n}$ and  $\gamma_{z}(gv)=d_{z}gv$. 
\end{lem}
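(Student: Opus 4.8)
The plan is to manufacture the unitary $g$ by applying Lemma \ref{masa} to the automorphism $\alpha:=\Ad v\circ\varphi$ of $\rc$, and then to read off both asserted properties of $gv$ from the resulting invariance of a MASA together with the identities of Lemmas \ref{Xgroup} and \ref{vzk}. To invoke Lemma \ref{masa} I first check its hypothesis: by Lemma \ref{Uk} there is a positive integer $k$ and a unitary $U\in\rc$ with $(\Ad v\circ\varphi)^k=\Ad U$ on $\rc$, and an inner automorphism restricts trivially to the centre, so $\alpha^k$ acts trivially on $\Z(\rc)$. Lemma \ref{masa} then supplies a MASA $D$ of $\lambda_u(\F_n)'\cap\F_n$ and a unitary $g\in\rc$ with $(\Ad g\circ\alpha)^k=\id$ and $(\Ad g\circ\alpha)(D)=D$. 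Since $\Ad g\circ\alpha=\Ad(gv)\circ\varphi$, this says $(\Ad(gv)\circ\varphi)^k=\id$ on $\rc$ and $(\Ad(gv)\circ\varphi)(D)=D$; these $g$ and $k$ will be the objects claimed in the lemma.

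Next I would compute the gauge covariance of $gv$. Starting from $\gamma_z(g)=u_z g u_z^*$ (valid for $g\in\rc$), the relation $\gamma_z(v)=v_z v$, and the identity $v_z=X_z u_z(\Ad v\circ\varphi)(u_z^*)$ from Lemma \ref{Xgroup}, a direct manipulation in which the central unitary $X_z$ is moved to the left yields
$$ \gamma_z(gv)=X_z u_z(gv)\varphi(u_z^*). $$
Setting $d_z:=\gamma_z(gv)(gv)^*=X_z u_z(\Ad(gv)\circ\varphi)(u_z^*)$ we then have $\gamma_z(gv)=d_z(gv)$, which is the required covariance once $d_z$ is shown to have the right form.

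This last point is the heart of the matter and the step I expect to be the main obstacle: a priori $d_z$ is merely a unitary in $\rc$, and I must place it in $\lambda_u(\F_n)'\cap\F_n$ and verify that $\{d_z\}$ is a group. Here the MASA-invariance $(\Ad(gv)\circ\varphi)(D)=D$ from Lemma \ref{masa}(ii) is decisive. Because a central element of $\rc$ is gauge-invariant and commutes with $\lambda_u(\F_n)'\cap\F_n$, one has $\Z(\rc)\subseteq\Z(\lambda_u(\F_n)'\cap\F_n)\subseteq D$; since $u_z$ likewise lies in $\Z(\lambda_u(\F_n)'\cap\F_n)\subseteq D$ and $\Ad(gv)\circ\varphi$ preserves $D$, all three factors of $d_z$ lie in $D$. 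Hence $d_z\in D\subseteq\lambda_u(\F_n)'\cap\F_n$; in particular $d_z$ is gauge-invariant, and as $D$ is abelian the $d_z$ commute with one another. Combined with the cocycle identity $d_{z_1 z_2}=\gamma_{z_1}(d_{z_2})d_{z_1}$, forced by $\gamma$ being an action, gauge-invariance of $d_z$ upgrades the cocycle to a homomorphism, so that $\{d_z\}_{z\in\T}$ is a unitary group in $\lambda_u(\F_n)'\cap\F_n$.

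Finally, for $(gv)_k\in\F_n$ I would observe that $gv$ itself satisfies the standing assumption (\ref{assumption}): from $(gv)v^*=g\in\rc$ one obtains $\lambda_{gv}|_{\F_n}=\lambda_v|_{\F_n}=\lambda_u|_{\F_n}$, whence $\lambda_{gv}(\F_n)'\cap\OO_n=\rc$. As we have already produced $\gamma_z(gv)=d_z(gv)$ with $d_z\in\lambda_u(\F_n)'\cap\F_n$, the data $(1,gv,d_z,u_z)$ play the roles of $(w,v,v_z,u_z)$ in Theorem \ref{main1} for the unitary $gv$, so the argument of Lemma \ref{Uk} applies verbatim to $gv$. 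Taking there the pair $(\Ad(gv)\circ\varphi)^k=\id=\Ad 1$, i.e. $U=1$, its conclusion $U^*(gv)_k\in\F_n$ reads precisely $(gv)_k\in\F_n$. Concretely this is the trace estimate of Lemma \ref{Uk}: one finds $\gamma_z((gv)_k)$ equal to a central unitary group times $(gv)_k$, and the inequality $\tau(KK^*)>\tau(K^*K)$ forces the corresponding exponents to vanish, so that $(gv)_k$ is gauge-invariant.
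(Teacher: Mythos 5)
Your proof is correct and follows essentially the same route as the paper: apply Lemma \ref{masa} to $\alpha=\Ad v\circ\varphi$ to obtain $g$, $k$ and the invariant MASA $D$, place $d_z=\gamma_z(gv)(gv)^*$ in $D$, and rerun the trace argument of Lemma \ref{Uk} to conclude $(gv)_k\in\F_n$. Your local variations --- writing $d_z=X_zu_z(\Ad(gv)\circ\varphi)(u_z^*)$ as a product of three elements of $D$ instead of using the commutant of $D$, deducing the group property from the cocycle identity plus gauge-invariance, and applying the Lemma \ref{Uk} argument to $gv$ with $U=1$ rather than to $v$ with $U=(g^{(k)})^*$ --- are all sound.
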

\begin{proof}
Put $\alpha:=\Ad v\circ\varphi$, and let $g$ and $k$ be as in Lemma \ref{masa}. Then we have 
$$ (\Ad gv \circ \varphi)^k = \id, $$ 
and thus 
$$ \Ad v_k \circ \varphi^k = (\Ad v \circ \varphi)^k = \Ad (g^{(k)})^* $$ 
by parts (ii) and  (iv) of Lemma \ref{vzk}. 
Then arguing as in the proof of Lemma \ref{Uk} (with ${g^{(k)}}^{*}$ playing the role of $U$), we get 
$$
(gv)_{k}=g^{(k)}v_{k}\in \mathcal{F}_{n}.
$$
Now, let $D$ be a MASA as in Lemma \ref{masa}. For all $x\in D$ and $z\in\T$, 
we see that 
$$
gv\varphi(x)v^{*}g^{*} = \gamma_{z}(gv\varphi(x)v^{*}g^{*}) = \gamma_z(g)v_zv\varphi(x)
v^*v_z^*\gamma_z(g^*)  
$$ 
$$
= (\gamma_z(g)v_zg^*)(gv\varphi(x)v^*g^*)(\gamma_z(g)v_zg)^*,  
$$
which implies that $\gamma_z(g)v_zg^*$ is in the commutant of MASA $D$, and hence in $D$ itself. 
Set $d_{z}=\gamma_{z}(g)v_{z}g^{*}$, a unitary in $D$. Now, $d_z=u_zgu_z^*v_zg^*$ implies 
$u_z^*d_z = g(u_z^*v_z)g^*$. Since $\{u_z\}_{z\in\T}$ and $\{v_z\}_{z\in\T}$ are commuting 
unitary groups, so is $\{u_z^*d_z\}_{z\in\T}$, and consequently also is $\{d_z\}_{z\in\T}$. 
Finally, we see that $\gamma_z(gv)=\gamma_z(g)v_zv=d_zgv$. 
\end{proof}

Now, we are ready to prove the following result. 

\begin{thm}\label{main3}
Let $u\in\U(\OO_n)$. 
If $\lambda_{u}(\mathcal{F}_{n})\subseteq \mathcal{F}_{n}$ and 
${\rm dim}\lambda_{u}(\mathcal{F}_{n})'\cap \mathcal{F}_{n}<\infty$, then there exists a unitary 
$W\in \lambda_{u}(\mathcal{F}_{n})'\cap \mathcal{O}_{n}$ satisfying the following. 
\begin{enumerate}
\item 
There exists a unitary group $\{d_{z}\}_{z\in\T}\subseteq \lambda_{u}(\mathcal{F}_{n})'\cap 
\mathcal{F}_{n}$ such that $\gamma_{z}(Wu)=d_{z}Wu$ for all $z\in {\Bbb T}$.
\item There exists a positive integer $k$ such that $(Wu)_{k}\in \mathcal{F}_{n}$. 
\end{enumerate}
\end{thm}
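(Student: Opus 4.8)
The plan is to reduce the theorem almost entirely to Lemma \ref{avk} and Theorem \ref{main1}, which together already supply a unitary with the two required properties; the only remaining work is to repackage the factorization so that it is expressed in terms of a left multiplication by an element of the relative commutant acting on the \emph{original} unitary $u$.

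First I would invoke Theorem \ref{main1} to obtain the factorization $u=wv$, where $w\in\rc$ is unitary, $v\in\OO_n$ is unitary, and there is a unitary group $\{v_z\}_{z\in\T}\subseteq\frc$ with $\gamma_z(v)=v_z v$. In particular $v=w^*u$. Next I would apply Lemma \ref{avk} (to this same $v$), which furnishes a positive integer $k$, a unitary $g\in\rc$, and a unitary group $\{d_z\}_{z\in\T}\subseteq\frc$ such that $(gv)_k\in\F_n$ and $\gamma_z(gv)=d_z gv$ for all $z\in\T$.

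The key step is then the choice $W:=gw^*$. Since $\rc$ is a $C^*$-algebra and both $g$ and $w$ lie in it, $W$ is a unitary in $\rc$ as required. Moreover $Wu=gw^*u=g(w^*u)=gv$, so the element $Wu$ is exactly the unitary $gv$ produced by Lemma \ref{avk}. Property (1) is then immediate: $\gamma_z(Wu)=\gamma_z(gv)=d_z gv=d_z Wu$, with $\{d_z\}\subseteq\frc$ as supplied by the lemma. Property (2) is equally immediate: $(Wu)_k=(gv)_k\in\F_n$.

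I expect essentially no obstacle here, since the substantive analysis — diagonalizing the relevant unitary groups, controlling the center via the $X_z^{(k)}$ argument, and extracting a MASA-preserving inner correction — has already been carried out in Lemmas \ref{Xgroup} through \ref{avk}. The only points needing a line of verification are that $W=gw^*$ genuinely lands in $\rc$ (closure of the relative commutant under products and adjoints) and that the identity $Wu=gv$ holds, both of which are formal. Thus the proof amounts to assembling $w$ from Theorem \ref{main1} and $g,k,\{d_z\}$ from Lemma \ref{avk} and setting $W=gw^*$.
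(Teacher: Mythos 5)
Your proposal is correct and follows exactly the paper's own argument: the paper likewise takes $u=wv$ from Theorem \ref{main1}, applies Lemma \ref{avk}, and sets $W:=gw^*$ so that $Wu=gv$. The extra verifications you spell out (that $W\in\lambda_u(\F_n)'\cap\OO_n$ and $Wu=gv$) are indeed the only formal points, and they hold as you state.
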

\begin{proof}
Let $u=wv$ be a factorization as in Theorem \ref{main1}, and let $k\in\N$ and 
$g\in\lambda_u(\F_n)'\cap\OO_n$ be as in Lemma \ref{avk} above. Then setting $W:=gw^*$ gives the claim. 
\end{proof}


\section{The criterion and examples}

In  this section, we give a dynamic characterization of those unitaries $u\in\OO_n$ satisfying our standing 
assumptions which either belong to $\F_n$ (Theorem \ref{main4}) or admit a unitary $v\in\F_n$ such that 
$\lambda_u|_{\F_n} = \lambda_v|_{\F_n}$ (Corollary \ref{maincriterion}). Before proving these results, 
we still need one technical lemma about the structure of the relative commutants. We keep our standing 
assumptions (\ref{assumption}). 

\begin{lem}\label{xztechnical}
There exist a unitary group $\{q_z\}_{z\in\T}$ in $\Z(\rc)$ such that 
$$ X_z = q_z(\Ad v\circ\varphi)(q_z^*). $$
\end{lem}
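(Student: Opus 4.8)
The plan is to exploit the finite-dimensional abelian structure of $\Z(\rc)$ and to recast the desired identity as a coboundary condition for the automorphism $\beta:=\Ad v\circ\varphi$. First I would record that, since $\Z(\rc)$ is a finite-dimensional commutative $C^*$-algebra, its unitary group is a torus: writing $p_1,\ldots,p_m$ for the minimal central projections of $\rc$, every unitary group $\{X_z\}_{z\in\T}$ in $\Z(\rc)$ has the form $X_z=\sum_{i=1}^m z^{m_i}p_i$ for a uniquely determined tuple of integers $m_i\in\Zb$, because the only continuous homomorphisms $\T\to\T$ are the integer powers. (This is the same reasoning already used for $X_z^{(k)}$ in the proof of Lemma \ref{Uk}.) The automorphism $\beta$, being an automorphism of the finite-dimensional algebra $\rc$, maps $\Z(\rc)$ onto itself and hence permutes the $p_i$; I would fix the permutation $\sigma$ determined by $\beta(p_i)=p_{\sigma(i)}$.

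Next I would translate the hypotheses into arithmetic conditions on the exponents $m_i$. By Lemma \ref{Uk} there is a positive integer $k$ with $(\Ad v\circ\varphi)^k=\Ad U$ on $\rc$ for some unitary $U\in\rc$, and for this $k$ we have $X_z^{(k)}=1$. Since $\Ad U$ fixes the center pointwise, $\beta^k=\id$ on $\Z(\rc)$, so $\sigma^k=\id$ and the $p_i$ split into finitely many $\sigma$-orbits. Expanding $X_z^{(k)}=X_z\beta(X_z)\cdots\beta^{k-1}(X_z)$ coordinatewise and comparing with $1$ would then yield, for every $\sigma$-orbit $O$, the single relation $\sum_{i\in O}m_i=0$. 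This ``norm-zero'' condition is the crucial input.

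The core of the argument is to solve, over the integers, the telescoping system $m_i=n_i-n_{\sigma^{-1}(i)}$. I would do this one $\sigma$-orbit at a time: on an orbit $\{a_0,a_1,\ldots,a_{d-1}\}$ with $\sigma(a_j)=a_{j+1}$, I fix $n_{a_0}:=0$ and set $n_{a_j}:=\sum_{l=1}^{j}m_{a_l}$; the single closure condition needed for consistency when the cycle returns to $a_0$ is exactly $\sum_{l=0}^{d-1}m_{a_l}=0$, which holds by the previous step. Having produced the integers $n_i$, I would define $q_z:=\sum_i z^{n_i}p_i$, check directly that it is a unitary group in $\Z(\rc)$, and compute $q_z\beta(q_z^*)=\sum_i z^{\,n_i-n_{\sigma^{-1}(i)}}p_i=\sum_i z^{m_i}p_i=X_z$, as required.

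Conceptually this is the vanishing of the first (Tate) cohomology of the cyclic group $\langle\sigma\rangle$ acting by permutation on $\mathrm{Hom}(\T,\U(\Z(\rc)))\cong\Zb^m$, and the one place where something could a priori go wrong is a torsion obstruction to solving the telescoping system. The main point I would stress is that, precisely because unitary groups into a torus are parametrized by the torsion-free group $\Zb$ rather than by $\T$ itself, no such obstruction arises: the norm-zero condition supplied by Lemma \ref{Uk} is not merely necessary but also sufficient for integral solvability. This is the step I expect to require the most care to state cleanly, even though the underlying computation is elementary.
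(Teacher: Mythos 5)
Your proposal is correct and follows essentially the same route as the paper: write $X_z=\sum_i z^{m_i}p_i$ over the minimal central projections, use $X_z^{(k)}=1$ from Lemma \ref{Uk} together with the fact that each orbit length divides $k$ to get the norm-zero condition $\sum_{i\in O}m_i=0$ on every orbit, and then solve the telescoping system by partial sums to define $q_z$. Your integers $n_{a_j}=\sum_{l=1}^{j}m_{a_l}$ are exactly the paper's exponents $r_i^{(j)}$, so the two arguments coincide.
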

\begin{proof}
Since $\Ad v\circ\varphi$ restricts to an automorphism of $\Z(\rc)$, there exist minimal projections 
$p_i^{(j)}$, $j=1,\ldots,N$, $i=1,\ldots,n_j$, in $\Z(\rc)$ such that 
$$ \Z(\rc) =\bigoplus_{j=1}^N\bigoplus_{i=1}^{n_j}p_i^{(j)} $$
and 
$$ (\Ad v\circ\varphi)(p_i^{(j)})=p_{i+1}^{(j)} \;\;\text{for } i<n_j, \;\;\text{and } 
(\Ad v\circ\varphi)(p_{n_i}^{(j)})=p_{1}^{(j)}. $$
Then $X_z$ from Lemma \ref{Xgroup} can be written as
$$  X_z = \sum_{j=1}^N\sum_{i=1}^{n_j}z^{m_i^{(j)}}p_i^{(j)}, $$
for some $m_i^{(j)}\in\N$. Now, let $k\in\N$ be such that $\Ad v\circ\varphi$ is an inner automorphism 
of $\rc$. Then  
$$ X_z^{(k)}= X_z(\Ad v\circ\varphi)(X_z)(\Ad v\circ\varphi)^2(X_z)\ldots(\Ad v\circ\varphi)^{k-1}(X_z) =1 $$
by Lemma \ref{Uk}. Since each $n_j$ divides $k$, this implies that 
$$ \sum_{i=1}^{n_j}m_i^{(j)} = 0 $$ 
for each $j=1,\ldots,N$. Now, we want to define $q_z$ as follows, 
$$ q_z = \sum_{j=1}^N\sum_{i=1}^{n_j}z^{r_i^{(j)}}p_i^{(j)}, $$
for suitable chosen integers $r_i^{(j)}$, so that $X_z = q_z(\Ad v\circ\varphi)(q_z^*)$. 
To this end, it suffices to put 
$$ \begin{aligned}
r_1^{(j)} & ;= 0, \;\; j=1,\ldots,N, \\
r_k^{(j)} & := \sum_{r=2}^k m_r^{(j)}, \;\; j=1,\ldots,N,\; k=2,\ldots,n_j. 
\end{aligned} $$
\end{proof}

\begin{thm}\label{main4}
Let $u\in\U(\OO_n)$ be such that $\lambda_{u}(\mathcal{F}_{n})\subseteq \mathcal{F}_{n}$ and 
${\rm dim}\lambda_{u}(\mathcal{F}_{n})'\cap \mathcal{F}_{n}<\infty$. Put $\alpha:=\Ad u\circ\varphi$. 
If $\alpha$ satisfies the following two conditions:
\begin{enumerate}
\item[(i)] $\alpha(\frc) = \frc$, and 
\item[(ii)] $\alpha|_{\frc}$ preserves the $\tau$-trace, 
\end{enumerate}
then $u\in\F_n$. 
\end{thm}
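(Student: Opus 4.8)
The plan is to prove the statement directly by showing that the gauge one–cocycle of $u$ is trivial. Writing $c_z:=u\gamma_z(u^*)$, Proposition~\ref{3conditions}(3) gives $c_z\in\rc=:B$, and by construction $\gamma_z(u)=c_z^*u$; hence $u\in\F_n$ if and only if $c_z=1$ for every $z\in\T$. The one quantitative tool I will need is the trace–scaling (KMS) identity $\tau(aa^*)=n^{-m}\tau(a^*a)$, valid for every gauge–homogeneous $a\in\OO_n^{(m)}$, $m\in\Zb$; for $m\ge 0$ this follows at once from the decomposition $\OO_n^{(m)}=\bigoplus_{|\mu|=m}S_\mu\F_n$ together with the elementary computation $\tau(S_\mu yS_\nu^*)=\delta_{\mu,\nu}n^{-m}\tau(y)$ for $y\in\F_n$, and the case $m<0$ follows by passing to $a^*$. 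Throughout I use that $\alpha=\Ad u\circ\varphi$ is an automorphism of $B$ and that $\gamma_z=\Ad u_z$ on $B$ with $u_z$ in the center $\Z(\frc)$.

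First I would use hypothesis (i) to confine $c_z$ to $\Z(\frc)$. Starting from $\gamma_z(\alpha(x))=c_z^*\,\alpha(\gamma_z(x))\,c_z$ for $x\in B$ (a direct consequence of $\gamma_z(u)=c_z^*u$ and $\gamma_z\circ\varphi=\varphi\circ\gamma_z$), one rearranges this to $\alpha\circ\Ad u_z=\Ad(c_zu_z)\circ\alpha$ on $B$, whence $\Ad(\alpha(u_z))=\Ad(c_zu_z)$ and therefore $\alpha(u_z)=\zeta_z\,c_z\,u_z$ for some central $\zeta_z\in\Z(B)$. Since (i) makes $\alpha$ an automorphism of $\frc$, it preserves $\Z(\frc)$; as $u_z\in\Z(\frc)$ and $\Z(B)\subseteq\Z(\frc)$, solving $c_z=\zeta_z^*u_z^*\,\alpha(u_z)$ shows $c_z\in\Z(\frc)$. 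Because $c_z\in\F_n$, the cocycle relation collapses to a genuine homomorphism $\T\to\U(\Z(\frc))$, so, listing the minimal projections of $\Z(\frc)$ as $e_1,\dots,e_r$, one may write $c_z=\sum_i z^{k_i}e_i$ with $k_i\in\Zb$, and consequently each slice $e_iu$ is gauge–homogeneous with $e_iu\in\OO_n^{(-k_i)}$.

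Finally I would combine a trace–scaling recursion with hypothesis (ii). The automorphism $\alpha$ permutes $e_1,\dots,e_r$; if $\alpha(e)=e'$, then $u\varphi(e)u^*=e'$ gives $e'u=u\varphi(e)$, so $(e'u)(e'u)^*=e'$ while $(e'u)^*(e'u)=\varphi(e)$. Applying the trace–scaling identity to $e'u\in\OO_n^{(-k')}$ (where $e'=e_{i'}$, $k'=k_{i'}$) and using $\tau\circ\varphi=\tau$ yields $\tau(e')=n^{k'}\tau(e)$. By hypothesis (ii) we have $\tau(e')=\tau(\alpha(e))=\tau(e)>0$, forcing $n^{k'}=1$, i.e.\ $k'=0$; since $\alpha$ maps onto all minimal projections of $\Z(\frc)$, every $k_i=0$, so $c_z=\sum_i e_i=1$ and $u\in\F_n$. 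I expect the main obstacle to be the first step: recognizing that the trace–scaling property of $\OO_n$ is the correct quantitative input and, crucially, that hypothesis (i) forces the gauge twist $c_z$ to be diagonal on $\Z(\frc)$, so that well–defined integer degrees $k_i$ can be attached to the pieces $e_iu$ of $u$. Once this localization is in place, hypothesis (ii) eliminates these degrees almost immediately.
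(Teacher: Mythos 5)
Your proof is correct, and it reaches the conclusion by a leaner route than the paper's. Both arguments pivot on the same two facts: the gauge cocycle $c_z=u\gamma_z(u^*)$ lies in $\Z(\frc)$, hence has the form $\sum_i z^{k_i}e_i$ over minimal projections of $\Z(\frc)$, and a trace-scaling computation combined with hypothesis (ii) forces every $k_i=0$. The difference is in how the first fact is established. The paper routes it through its accumulated machinery — the factorization $u=wv$ of Theorem \ref{main1}, the central unitary group $X_z$ of Lemma \ref{Xgroup}, and Lemma \ref{xztechnical}, which writes $X_z$ as a coboundary $q_z(\Ad v\circ\varphi)(q_z^*)$ so that $u'_z:=q_zu_z$ satisfies $\gamma_z(u)=u'_z\alpha({u'_z}^*)u$, after which hypothesis (i) places the cocycle in $\Z(\frc)$. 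Your derivation — $\gamma_z\circ\alpha=\Ad(c_z^*)\circ\alpha\circ\gamma_z$ on $\rc$, hence $\Ad(c_zu_z)=\Ad(\alpha(u_z))$ there, hence $c_z\in\alpha(u_z)\,\Z(\rc)\,u_z^*\subseteq\Z(\frc)$ — uses only the bullet-point consequences of the standing assumption and bypasses Lemma \ref{xztechnical} entirely, which is a genuine simplification for this theorem. The endgame is essentially identical: your KMS-type identity $\tau(aa^*)=n^{-m}\tau(a^*a)$ for $a\in\OO_n^{(m)}$ is exactly what the paper verifies by hand with the element $R=p_ju(S_1^*)^{k_j}$, and your use of $\alpha(e)=e'$ together with $\tau\circ\varphi=\tau$ matches the paper's comparison of $\tau(RR^*)$ with $\tau(R^*R)$. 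One small point you should make explicit is the inclusion $\Z(\rc)\subseteq\Z(\frc)$, needed to place $\zeta_z$ in $\Z(\frc)$; it follows from the stated fact that $\rc$ contains a MASA consisting of projections of $\frc$ (every MASA contains the center, so $\Z(\rc)\subseteq\frc$, and it commutes with $\frc$).
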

\begin{proof}
At first, we observe that there exists a unitary group $\{u'_z\}_{z\in\T}$ in $\Z(\frc)$ such that 
$\Ad u'_z(x)=\gamma_z(x)$ for all $x\in\rc$ and $\gamma_z(u)=u'_z\alpha({u'_z}^*)u$. Indeed, 
it suffices to put $u'_z:=q_zu_z$, with $q_z$ as in Lemma \ref{xztechnical} above. Then 
$\alpha(u'_z)\in\Z(\frc)$ by condition (i) of the theorem, and hence $\{u'_z\alpha({u'_z}^*)\}_{z\in\T}$ 
is a unitary group. Thus, $u'_z\alpha({u'_z}^*) = \sum z^{k_j}p_j$ for some integers $k_j$ and a partition 
of unity by projections $p_j$ from  $\Z(\frc)$. 

Now, we claim that $p_j=0$ whenever $k_j\neq 0$. To this end, suppose first that $k_j>0$ for some index $j$, 
and put $R:=p_{k_j}u(S_1^*)^{k_j}$. We have $\gamma_z(R)=R$ for all $z\in\T$, and thus $R\in\F_n$. 
However, an easy calculation shows that $RR^*=p_{k_j}$ and $R^*R=\varphi^{k+1}(\alpha^{-1}(p_{k_j}))
S_1^k(S_1^*)^k$. In view of condition (ii) of the theorem, this would imply $\tau(RR^*)\neq\tau(R^*R)$ if 
$p_j\neq 0$, a contradiction. Therefore $p_j=0$ for all $k_j>0$. A similar argument shows that 
$p_j=0$ if $k_j<0$. 

Consequently, $u'_z\alpha({u'_z}^*) = 1$. But this gives $\gamma_z(u)=u$ for all $z\in\T$. Hence $u\in\F_n$ 
and the theorem is proved. 
\end{proof}
We note that Theorem \ref{main4} gives a necessary and sufficient condition for $u\in\F_n$, since the 
reverse implication is trivial. Likewise, Corollary \ref{kmain4} below,  gives a necessary and sufficient 
condition for $u_k\in\F_n$. 

\begin{cor}\label{kmain4}
Let $u\in\U(\OO_n)$ be such that $\lambda_{u}(\mathcal{F}_{n})\subseteq \mathcal{F}_{n}$ and 
${\rm dim}\lambda_{u}(\mathcal{F}_{n})'\cap \mathcal{F}_{n}<\infty$. Put $\alpha:=(\Ad u\circ\varphi)^k$, 
for some positive integer $k$. If $\alpha$ satisfies the following two conditions:
\begin{enumerate}
\item[(i)] $\alpha(\frc) = \frc$, and 
\item[(ii)] $\alpha|_{\frc}$ preserves the $\tau$-trace, 
\end{enumerate}
then $u_k\in\F_n$. 
\end{cor}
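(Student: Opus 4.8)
The plan is to re-run the proof of Theorem \ref{main4} essentially verbatim, but with the unitary $u$ and the shift $\varphi$ replaced throughout by $u_k$ and $\varphi^k$. Concretely, I set $\alpha:=(\Ad u\circ\varphi)^k$; by part (ii) of Lemma \ref{vzk} (with $g=u$) this equals $\Ad u_k\circ\varphi^k$, and since $u=wv$ with $w\in\rc$ (Theorem \ref{main1}) a $k$-fold expansion via parts (ii) and (iv) of Lemma \ref{vzk} gives $\alpha=\Ad w^{(k)}\circ(\Ad v\circ\varphi)^k$ with $w^{(k)}\in\rc$, so $\alpha$ is an automorphism of $\rc$. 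The goal is exactly to prove $\gamma_z(u_k)=u_k$ for all $z\in\T$, i.e. $u_k\in\F_n$. I would stress at the outset why one cannot simply invoke Theorem \ref{main4} for the Cuntz subalgebra of $\OO_n$ generated by the length-$k$ words (whose core is $\F_n$ and whose canonical shift is $\varphi^k$): the unitary $u_k$ need not lie in that subalgebra, so the corresponding $\lambda$-endomorphism is not even defined there. The whole argument therefore stays inside $\OO_n$.

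First I would assemble the ``$k$-graded'' analogues of the degree-one data used in Theorem \ref{main4}. Recalling that $u_k=w^{(k)}v_k$ by part (iv) of Lemma \ref{vzk}, parts (i) and (iii) of that lemma give
$$ \gamma_z(v_k)=v_z^{(k)}v_k=X_z^{(k)}u_z(\Ad v\circ\varphi)^k(u_z^*)v_k, $$
which is precisely the analogue of Lemma \ref{Xgroup} with $(v,\Ad v\circ\varphi,X_z)$ replaced by $(v_k,(\Ad v\circ\varphi)^k,X_z^{(k)})$, and $\{X_z^{(k)}\}$ is a unitary group in $\Z(\rc)$. The observation that makes the transport painless is that the splitting of Lemma \ref{xztechnical}, namely $X_z=q_z(\Ad v\circ\varphi)(q_z^*)$ with $q_z\in\Z(\rc)$, telescopes: since all factors are central and hence commute,
$$ X_z^{(k)}=\prod_{i=0}^{k-1}(\Ad v\circ\varphi)^i(q_z)\,(\Ad v\circ\varphi)^{i+1}(q_z^*)=q_z(\Ad v\circ\varphi)^k(q_z^*). $$
Thus the \emph{same} $q_z$ splits $X_z^{(k)}$ for the automorphism $(\Ad v\circ\varphi)^k$. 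Setting $u'_z:=q_zu_z\in\Z(\frc)$ exactly as in Theorem \ref{main4}, the computation there carries over line by line (using $\gamma_z(w^{(k)})=u_zw^{(k)}u_z^*$, centrality of $X_z^{(k)}$, and $\alpha(q_z^*)=(\Ad v\circ\varphi)^k(q_z^*)$) to yield $\gamma_z(u_k)=u'_z\alpha({u'_z}^*)u_k$ with $\Ad u'_z=\gamma_z$ on $\rc$. By hypothesis (i), $\alpha(u'_z)\in\Z(\frc)$, so $\{u'_z\alpha({u'_z}^*)\}$ is a unitary group in $\Z(\frc)$, say $u'_z\alpha({u'_z}^*)=\sum_j z^{m_j}p_j$ with central projections $p_j\in\F_n$.

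Finally I would run the trace argument to force every $m_j=0$. The one genuine deviation from the length-$k$ picture occurs here: to cancel the exponent $z^{m_j}$ one must use the weight-one generator $S_1$, not a length-$k$ word. For $m_j>0$ put $R:=p_ju_k(S_1^*)^{m_j}$; then $\gamma_z(R)=R$, so $R\in\F_n$, and $RR^*=p_j$, while using $u_k^*p_ju_k=\varphi^k(\alpha^{-1}(p_j))$ together with $S_1^{m_j}\varphi^k(y)=\varphi^{k+m_j}(y)S_1^{m_j}$ one obtains $R^*R=\varphi^{k+m_j}(\alpha^{-1}(p_j))P_{1^{m_j}}$, whence $\tau(R^*R)=n^{-m_j}\tau(\alpha^{-1}(p_j))=n^{-m_j}\tau(p_j)$ by hypothesis (ii). Since $R\in\F_n$ forces $\tau(RR^*)=\tau(R^*R)$, we get $\tau(p_j)=n^{-m_j}\tau(p_j)$, impossible for $p_j\neq0$ unless $m_j=0$; the case $m_j<0$ is symmetric. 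Hence $u'_z\alpha({u'_z}^*)=1$, so $\gamma_z(u_k)=u_k$ and $u_k\in\F_n$. I expect the main obstacle to be the bookkeeping needed to certify that every ingredient of Theorem \ref{main4} really survives the replacement $\varphi\rightsquigarrow\varphi^k$ — the telescoping identity for $X_z^{(k)}$ is what makes this clean — together with the point that the closing trace computation must be performed with the original weight-one generator $S_1$ rather than inside the length-$k$ subalgebra.
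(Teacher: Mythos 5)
Your proof is correct and follows exactly the route the paper intends: the paper gives no separate argument for Corollary \ref{kmain4}, merely asserting it ``likewise'' from Theorem \ref{main4}, and your rerun of that proof with $u_k$, $\varphi^k$ and $\alpha=(\Ad u\circ\varphi)^k$ supplies the missing details faithfully. The telescoping identity $X_z^{(k)}=q_z(\Ad v\circ\varphi)^k(q_z^*)$ and the observation that the closing trace computation must still use the weight-one generator $S_1$ are precisely the points that need checking, and you handle both correctly.
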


Now, we are ready to give the following decomposability criterion. 

\begin{cor}\label{maincriterion}
Let $u\in\U(\OO_n)$ be such that $\lambda_{u}(\mathcal{F}_{n})\subseteq \mathcal{F}_{n}$ and 
${\rm dim}\lambda_{u}(\mathcal{F}_{n})'\cap \mathcal{F}_{n}<\infty$. Put $\alpha:=\Ad u\circ\varphi$. 
Then the following two conditions are equivalent:
\begin{enumerate}
\item[(1)]  There exist unitaries $w\in\lambda_{u}(\mathcal{F}_{n})'\cap\mathcal{O}_{n}$ and 
$v\in \mathcal{F}_{n}$ such that $u=wv$.  
\item[(2)] For each minimal projection $p\in\Z(\rc)$ there exists a $\tau$-preserving isomorphism 
$$ p(\frc) \cong \alpha(p)(\frc). $$
\end{enumerate}
\end{cor}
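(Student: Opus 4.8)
The plan is to prove the two implications separately, exploiting in both directions the fact that the automorphism $\alpha=\Ad u\circ\varphi$ of $\rc$ agrees with $\beta:=\Ad v\circ\varphi$ on the center $\Z(\rc)$ whenever $u=wv$ with $w\in\rc$, and that $\beta$ is well behaved relative to $\F_n$ and $\tau$ precisely when $v\in\F_n$. Throughout I use that, by the structural results from \cite{H} recalled after (\ref{assumption}), the unitary group $\{u_z\}\subseteq\Z(\frc)$ implements $\gamma$ on $\rc$; hence every minimal central projection $p$ of $\rc$ is $\gamma$-invariant, so $p\in\frc$, and $\frc=\bigoplus_p p(\frc)$. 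Moreover, since minimal projections of $\frc$ are minimal in $\rc$, the inclusion $p(\frc)\subseteq p(\rc)=M_D$ has \emph{multiplicity one}: minimal projections of $p(\frc)$ are rank-one in $M_D$.

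For $(1)\Rightarrow(2)$: assume $u=wv$ with $w\in\U(\rc)$ and $v\in\U(\F_n)$. Then $\lambda_u|_{\F_n}=\lambda_v|_{\F_n}$, so the relative commutants are unchanged and $\beta=\Ad v\circ\varphi=\Ad w^*\circ\alpha$ is an automorphism of $\rc$. The key point is that $v\in\F_n$ forces $\beta$ to restrict to a $\tau$-preserving automorphism of $\frc$: for $x\in\frc\subseteq\F_n$ one has $\beta(x)=v\varphi(x)v^*\in\F_n$, hence $\beta(\frc)\subseteq\rc\cap\F_n=\frc$ (equality by finite-dimensionality), and $\tau(\beta(x))=\tau(\varphi(x))=\tau(x)$ since both $\Ad v$ and $\varphi$ preserve $\tau$ on $\F_n$. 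Because $w\in\rc$, the inner part $\Ad w^*$ acts trivially on $\Z(\rc)$, so $\beta(p)=\alpha(p)$ for every minimal central projection $p$. Consequently $\beta$ restricts to a $\tau$-preserving isomorphism $p(\frc)\to\beta(p)(\frc)=\alpha(p)(\frc)$, which is exactly condition (2).

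For $(2)\Rightarrow(1)$: the plan is to use the given isomorphisms to correct $\alpha$ by an inner automorphism of $\rc$ and then invoke Theorem \ref{main4}. Fix a minimal central projection $p$; since $\alpha$ is an automorphism, it restricts to an isomorphism $p(\rc)\to\alpha(p)(\rc)$ of matrix algebras of the same size $D$, carrying the multiplicity-one subalgebra $p(\frc)$ onto a multiplicity-one subalgebra $\alpha(p(\frc))$ of $\alpha(p)(\rc)=M_D$. Let $\psi_p\colon p(\frc)\to\alpha(p)(\frc)$ be the $\tau$-preserving isomorphism supplied by (2). Then $\alpha|_{p(\frc)}$ and $\psi_p$ are two unital multiplicity-one $*$-representations of the finite-dimensional algebra $p(\frc)$ into $M_D$, so by the Noether--Skolem theorem there is a unitary $g_{\alpha(p)}\in\alpha(p)(\rc)$ with $\psi_p=\Ad g_{\alpha(p)}\circ\alpha$ on $p(\frc)$. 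Setting $g:=\sum_p g_{\alpha(p)}\in\U(\rc)$ (the index $\alpha(p)$ running over all minimal central projections as $p$ does), we obtain $\Ad g\circ\alpha|_{\frc}=\bigoplus_p\psi_p$, a $\tau$-preserving automorphism of $\frc$. Since $g\in\rc$ gives $\lambda_{gu}|_{\F_n}=\lambda_u|_{\F_n}$, the unitary $gu$ again satisfies (\ref{assumption}) with the same relative commutants, and $\Ad(gu)\circ\varphi=\Ad g\circ\alpha$ meets hypotheses (i) and (ii) of Theorem \ref{main4}. That theorem yields $gu\in\F_n$, whence $u=g^*(gu)$ is the desired factorization with $w:=g^*\in\rc$ and $v:=gu\in\F_n$, and $\lambda_u|_{\F_n}=\lambda_v|_{\F_n}$ follows from \cite[Proposition 2.1]{CRS}.

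The hard part will be the bookkeeping in $(2)\Rightarrow(1)$: one must check that $p(\frc)$ really embeds with multiplicity one into $p(\rc)$ (this is exactly where the result of \cite{H} that minimal projections of $\frc$ remain minimal in $\rc$ is indispensable), so that both $\alpha|_{p(\frc)}$ and $\psi_p$ are genuinely multiplicity-one and Noether--Skolem applies to conjugate one into the other; and that the locally defined unitaries $g_{\alpha(p)}$ assemble into a single $g\in\U(\rc)$ whose adjoint action, composed with $\alpha$, globally preserves both $\frc$ and $\tau$. Once this is arranged, all the remaining work is delegated to Theorem \ref{main4}. A secondary point to verify is that $gu$ inherits the standing hypotheses and that $\Ad(gu)\circ\varphi$ is an automorphism of $\rc$; both follow from $g\in\rc$ together with the results of \cite{CRS}.
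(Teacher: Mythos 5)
Your proof is correct and follows the route the paper clearly intends (the corollary is stated without proof, immediately after Theorem \ref{main4}): the forward implication via the action of $\Ad v\circ\varphi$ on $\frc$, and the converse by assembling a corrective unitary $g\in\rc$ block by block --- exactly the device of Lemma \ref{masa} --- so that $\Ad(gu)\circ\varphi$ satisfies hypotheses (i) and (ii) of Theorem \ref{main4} and that theorem gives $gu\in\F_n$. The only cosmetic point is that the unitary equivalence of $\alpha|_{p(\frc)}$ and $\psi_p$ is really the standard multiplicity count for unital representations of finite-dimensional $C^*$-algebras rather than Noether--Skolem proper, but your multiplicity-one observation, resting on the fact from \cite{H} that minimal projections of $\frc$ remain minimal in $\rc$, supplies exactly what is needed there.
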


Now, we show how to construct examples of endomorphisms $\lambda_u$ of $\OO_n$ globally 
preserving the core UHF-subalgebra $\F_n$ but such that no unitary $v\in\F_n$ exists for which 
$\lambda_u|_{\F_n}=\lambda_v|_{\F_n}$. 

To begin with, take two non-zero, orthogonal projections $q_1, q_2$ in $\F_n$ such that 
$\tau(q_2)/\tau(q_1)=n^r$ for some non-zero integer $r$. Let $A_1$ be a partial isometry in $\OO_n^{(-r)}$ 
with domain projection $\varphi(q_1)$ and range projection $q_2$. Likewise, let $A_2$ be a partial isometry 
in $\OO_n^{(r)}$ with domain projection $\varphi(q_2)$ and range projection $q_1$. Finally, let $A_3$ be a 
partial isometry in $\F_n$ with domain projection $1-\varphi(q_1)-\varphi(q_2)$ and range projection $1-q_1-q_2$. 
Put $u:=A_1+A_2+A_3$. Then $u$ is a unitary in $\OO_n$ such that 
\begin{equation}\label{aduq}
\Ad u\circ\varphi(q_1)=q_2 \;\;\; \text{and} \;\;\; \Ad u\circ\varphi(q_2)=q_1. 
\end{equation}
Now, $u\gamma_z(u^*)=z^rq_1 + z^{-r}q_2 +1-q_1-q_2$ belongs to $\span\{1,q_1,q_2\}$, 
and $\span\{1,q_1,q_2\}\subseteq\rc$ by \cite[Proposition 2.3]{CRS} and (\ref{aduq}) above.  
Thus $\lambda_u(\F_n)\subseteq\F_n$ by Proposition \ref{3conditions} above. 

More generally, Let $1=\sum q_j$ be a partition of unity by projections in $\OO_n$. Let $u$ be any unitary in 
$\OO_n$ such that $\Ad u\circ\varphi$ permutes projections $\{q_j\}$ and for each $j$ there is a $k_j\in\Zb$ 
such that $q_j u\in\OO_n^{(k_j)}$. Then $u\gamma_z(u^*)\in\span\{q_j\}\subseteq\rc$ for 
all $z\in\T$. This simple construction gives a large class of examples of unitaries $u\in\OO_n\setminus\F_n$ 
such that $\lambda_u(\F_n)\subseteq\F_n$. However, to verify the conditions of 
Corollary \ref{maincriterion} one needs more detailed information on the relative 
commutants $\lambda_u(\F_n)'\cap\F_n \subseteq \lambda_u(\F_n)'\cap\OO_n$. Exact determination 
of these relative commutants is rather difficult and does not seem possible in general, despite the 
identity from \cite[Proposition 2.3]{CRS}. However, it is quite doable in concrete cases. 

Now, we illustrate the above discussion with two concrete examples in $\OO_2$. In these examples, along with 
the main algebra $C^*(S_1,S_2)\cong\OO_2$, we consider its other subalgebras, also isomorphic to $\OO_2$. 
For example, if $T_1,T_2$ are isometries in $C^*(S_1,S_2)$ generating a copy of $\OO_2$, then we use 
subscript $T$ along with the standard notation to indicate that the object comes from $C^*(T_1,T_2)$ 
and its generators. Thus $\varphi_T$ denotes the usual shift on $C^(T_1,T_2)$, that is a map 
$\varphi:C^*(T_1,T_2) \to C^*(T_1,T_2)$ such that $\varphi(x)=T_1 xT_1^* + T_2 xT_2^*$. 
Similarly, $\D_T$ denotes the diagonal MASA of $C^*(T_1,T_2)$, and so on. The proof of one 
technical lemma needed in Example \ref{example1} is given afterwards. 


\begin{ex}\label{example2}
\rm 
Take $q_1=P_{11}$, $q_2=P_{222}$, and set 
$$ \begin{aligned}
A_1 & =  S_{2221}S_{111}^* + S_{2222}S_{211}^* \\
A_2 & = S_{111}S_{1222}^* + S_{112}S_{2222}^* \\
A_3 & = S_{1222}S_{2221}^* + S_{211}S_{112}^* + P_{121} + P_{1221} + P_{212} + P_{221}
\end{aligned} $$
We note that unitary $u:=A_1+A_2+A_3$ falls within the class of polynomial unitaries considered in 
\cite[Section 5]{CRS}. In particular, its graph $E_u$, as defined therein, admits the $\{-1,0,+1\}$ labelling:

\[ \beginpicture
\setcoordinatesystem units <0.8cm,0.8cm>
\setplotarea x from -5 to 1, y from -2 to 2
\put {$\bullet$} at -4 0
\put {$\bullet$} at -8 0

\put {$\bullet$} at 0 0
\put {$\bullet$} at 4 0
\put {$\bullet$} at 2 3

\setquadratic
\plot -8 0  -6 0.5  -4 0 /
\plot -8 0  -6 -0.5  -4 0 /

\plot 0 0  2 0.5  4 0 /
\plot 0 0  2 -0.5  4 0 /

\plot 0 0  0.8 1.8  2 3 /
\plot 4 0 3.2 1.8  2 3 /

\arrow <0.35cm> [0.2,0.6] from -5.7 0.5 to -6.25 0.5
\arrow <0.35cm> [0.2,0.6] from -6.25 -0.5 to -5.7 -0.5

\arrow <0.35cm> [0.2,0.6] from 1.7 0.5 to 2.25 0.5
\arrow <0.35cm> [0.2,0.6] from 2.25 -0.5 to 1.7 -0.5

\arrow <0.35cm> [0.2,0.6] from 0.765 1.75 to 0.9 1.95
\arrow <0.35cm> [0.2,0.6] from 3.45 1.4 to 3.5 1.3

\put{$11$} at -8.5 0.3
\put{${\bf +1}$} at -8.55 -0.35
\put{$222$} at -3.5 0.3
\put{${\bf -1}$} at -3.5 -0.35

\put{$21$} at -0.5 0.3
\put{${\bf 0}$} at -0.5 -0.2
\put{$12$} at 4.45 0.3
\put{${\bf 0}$} at 4.5 -0.2
\put{$221$} at 2 3.5
\put{${\bf 0}$} at 2 2.58

\put{\Large $E_u$} at -7 3.5
\endpicture \] 
This labelled graph satisfies the path condition defined in \cite[p.~616]{CRS}, 
and this is an alternative way of showing that $\lambda_u(\F_2)\subseteq\F_2$. 

Now, we have $P_{11}\OO_2 P_{11} \cong \OO_2=C^*(T_1,T_2)$, under the isomorphism 
sending $T_1$ to $S_{111}S_{11}^*$ and $T_2$ to $S_{112}S_{11}^*$. Similarly, 
$P_{222}\OO_2 P_{222} \cong \OO_2=C^*(R_1,R_2)$, under the isomorphism sending 
$R_1$ to $S_{2221}S_{222}^*$ and $R_2$ to $S_{2222}S_{222}^*$. Then an easy calculation shows that 
$$ \begin{aligned}
\Ad u\circ\varphi(T_j) & = \varphi_R(R_j), \\ 
\Ad u\circ\varphi(R_j) & = \varphi_T(T_j), 
\end{aligned} $$
for $j=1,2$. Consequently, the restriction of $(\Ad u\circ\varphi)^2$ 
to $P_{11}\OO_2 P_{11}$ is conjugate to $\varphi_R^2$. Likewise, the restriction of $(\Ad u\circ\varphi)^2$ 
to $P_{222}\OO_2 P_{222}$ is conjugate to $\varphi_T^2$. This immediately implies 
$$ \begin{aligned}
\lambda_u(\F_2)'\cap P_{11}\OO_2 P_{11} & \subseteq \bigcap_{k=1}^\infty (\Ad u\circ\varphi)^{2k} 
   (P_{11}\OO_2 P_{11}) = \C P_{11}, \\ 
\lambda_u(\F_2)'\cap P_{222}\OO_2 P_{222} & \subseteq \bigcap_{k=1}^\infty (\Ad u\circ\varphi)^{2k} 
   (P_{222}\OO_2 P_{222}) = \C P_{222}. 
\end{aligned} $$
That is, both $P_{11}$ and $P_{222}$ are minimal projections in $\rcc$. 
One easily checks that $\Ad u\circ\varphi(S_{11}S_{222}^*)=S_{222}S_{11}^*$. Thus  $S_{11}S_{222}^*$ 
is in $\rcc$, and we see that $(P_{11}+P_{222})\rcc(P_{11}+P_{222})\cong\Mb_2(\C)$. 
We remark that the restriction 
of $\Ad u\circ\varphi$ to $(P_{11}+P_{222})\OO_2(P_{11}+P_{222})$ is conjugate to endomorphism 
$\rho_{1342}$ from \cite{CSJMP}. Let 
$$ w:=S_{11}S_{222}^* + S_{222}S_{11}^* + 1-P_{11}-P_{222}. $$
Then $w$ is a unitary in $\rcc$ such that $w^*u\in\F_2$. 
\hfill$\Box$
\end{ex}


\begin{ex}\label{example1}{\rm

Take $q_1=P_{1}$, $q_2=P_{21}$, and set 
$$ \begin{aligned}
A_1 & = S_{211}S_{21}^* + S_{2121}S_{112}^* + S_{2122}S_{111}^*,  \\
A_2 & = S_{12}S_{121}^* + S_{11}S_{221}^*,  \\
A_3 & = S_{221}S_{122}^* + P_{222}. 
\end{aligned} $$
We put $u:=A_1+A_2+A_3$. By construction, $\Ad u\circ\varphi(P_1) = P_{21}$ and also 
$\Ad u\circ\varphi(P_{21}) = P_{1}$. Hence $\Ad u\circ\varphi(P_{22})=P_{22}$ as well. 

We have $P_{22}C^*(S_1,S_2)P_{22} \cong \OO_2=C^*(R_1,R_2)$, under the identification of 
$S_{221}S_{22}^*$ with $R_1$ and $S_{222}S_{22}^*$ with $R_2$. This isomorphism yields a 
conjugation between the restriction of $\Ad u\circ\varphi$ to $P_{22}C^*(S_1,S_2)P_{22}$ and the 
shift $\varphi_R$. Consequently, 
$$ \lambda_u(\F_2)'\cap P_{22}C^*(S_1,S_2)P_{22} = \bigcap_{k=1}^\infty (\Ad u\circ\varphi)^{k} 
   (P_{22}C^*(S_1,S_2)P_{22}) = \C P_{22}. $$

We have $P_1C^*(S_1,S_2)P_1 \cong \OO_2 = C^*(T_1,T_2)$, under the identification of 
$S_{11}S_1^*$ with $T_1$ and $S_{12}S_1^*$ with $T_2$. This isomorphism carries the restriction of 
$(\Ad u\circ\varphi)^2$ to $P_{1}C^*(S_1,S_2)P_{1}$ to the endomorphism of $C^*(T_1,T_2)$ given 
as composition $\varphi_T\circ\psi_T$, where $\psi_T$ is an endomorphism of $C^*(T_1,T_2)$ such that 
$$ \psi_T(x) = T_1xT_1^* + T_2(\Ad F_T(x))T_2^*, $$ 
where $F_T:=T_2T_1^* + T_1T_2^*$. By Lemma \ref{phipsi}, we have 
$$  \lambda_u(\F_2)'\cap P_{1}C^*(S_1,S_2)P_{1} \subseteq \bigcap_{k=1}^\infty (\Ad u\circ\varphi)^{2k} 
   (P_{1}C^*(S_1,S_2)P_{1}) = \C P_{1}. $$

We have $P_{21}C^*(S_1,S_2)P_{21} \cong \OO_2 = C^*(V_1,V_2)$, under the identification of 
$S_{211}S_{21}^*$ with $V_1$ and $S_{212}S_{21}^*$ with $V_2$. This isomorphism carries the restriction of 
$(\Ad u\circ\varphi)^2$ to $P_{21}C^*(S_1,S_2)P_{21}$ to $\psi_V\circ\varphi_V$. An argument similar 
to that from Lemma \ref{phipsi} shows that $\lambda_u(\F_2)'\cap P_{21}C^*(S_1,S_2)P_{21} = \C P_{21}$. 
Alternatively, this also easily follows from the preceding argument and the fact that 
$\Ad u\circ\varphi(P_{21})=P_1$. 

In view of the above, either $\lambda_u(\F_2)'\cap\OO_2 = \span\{P_1,P_{21},P_{22}\}\cong\C^3$, 
or $P_1$ and $P_{21}$ are equivalent in $\lambda_u(\F_2)'\cap\OO_2$. In the latter case, 
$\lambda_u(\F_2)'\cap\OO_2$ contains a subalgebra isomorphic to $M_2(\C)$ which is invariant under 
$\Ad u\circ\varphi$ and has $P_1$ and $P_{21}$ as its minimal projections.  
Suppose for a moment that this is the case. 
Then $\Ad u\circ\varphi$ restricts to a non-trivial automorphism of $M_2(\C)$, by necessity inner. 
The implementing unitary matrix $g$ is fixed by $\Ad u\circ\varphi$ and thus belongs to $\lambda_u(\OO_2)'
\cap\OO_2$. Matrix $g$ has both diagonal entries equal to $0$. Multiplying $g$ by a suitable scalar of 
modulus $1$, we can find such $g$ that is self-adjoint. Now we see that there is a unitary element 
$x$ of $\OO_2$ such that 
$$ g = S_{21}x^*S_1^* + S_1xS_{21}^* \in \lambda_u(\OO_2)'\cap\OO_2. $$ 
Now, writing $F:=S_1S_2^* + S_2S_1^*$, we compute 
$$ \begin{aligned}
{\rm Ad}u\circ\varphi(g)
&=u
(S_{11}xS_{121}^{*}+S_{121}x^{*}S_{11}^{*}
+
S_{21}xS_{221}^{*}+S_{221}x^{*}S_{21}^{*}
)
u^{*}\\
&=
S_{212}FxS_{12}^{*}+S_{12}x^{*}FS_{212}^{*}
+
S_{211}xS_{11}^{*}+S_{11}x^{*}S_{211}^{*}, 
\end{aligned} $$
and hence we get 
$$
S_{1}xS_{21}^{*}+S_{21}x^{*}S_{1}^{*}
=S_{212}FxS_{12}^{*}+S_{12}x^{*}F^{*}S_{212}^{*}
+
S_{211}xS_{11}^{*}+S_{11}x^{*}S_{211}^{*}.
$$
Multiplying by $S_{1}^{*}$ from the left-side and by 
$S_{21}$ from the right-side, we obtain 
\begin{equation}\label{technical}
x=S_{2}x^{*}FS_{2}^{*}+S_{1}x^{*}S_{1}^{*}. 
\end{equation}
Equation (\ref{technical}) implies $xS_1=S_1x^*$ and $S_1^*x=x^*S_1^*$. 
These two combined then yield $(x+x^*)S_1=S_1(x+x^*)$ and $(x-x^*)S_1 = -S_1(x-x^*)$. 
By \cite[Proposition 4]{MaTo}, both $x+x^*$ and $x-x^*$ are scalars, and thus so is $x$. 
This however contradicts (\ref{technical}). 

Thus $\lambda_u(\F_2)'\cap\OO_2 = \span\{P_1,P_{21},P_{22}\}$ and since $\tau(P_1)\neq \tau(P_{21})$, 
we conclude from Corollary 
\ref{maincriterion} that there are no unitaries $w\in\lambda_{u}(\mathcal{F}_{2})'\cap\mathcal{O}_{2}$ and 
$v\in \mathcal{F}_{2}$ such that $u=wv$. 
\hfill$\Box$
}\end{ex}

\begin{lem}\label{phipsi}
Let $\psi_T$ be an endomorphism of $C^*(T_1,T_2)\cong \OO_2$ such that 
$$ \psi_T(x) = T_1xT_1^* + T_2(\Ad F_T(x))T_2^*, $$ 
where $F_T:=T_2T_1^* + T_1T_2^*$. Then we have
$$ \bigcap_{k=1}^\infty (\varphi_T\psi_T)^k(C^*(T_1,T_2))=\C1. $$
\end{lem}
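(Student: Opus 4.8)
The plan is to bypass any direct analysis of the intersection and instead to collapse the iterates of $\varphi_T\psi_T$ onto pure powers of the shift $\varphi_T$; throughout I write $\OO_2$ for $C^*(T_1,T_2)$. The first point is that $\psi_T$ is itself a unital $*$-endomorphism: since $F_T=F_T^*$ and $F_T^2=1$, putting $W:=T_2F_T$ gives $\psi_T(x)=T_1xT_1^*+WxW^*$, where $T_1$ and $W$ are isometries with $T_1^*W=0$, $W^*W=1$ and orthogonal ranges summing to $1$, so $\psi_T$ is multiplicative and unital. Hence every composite occurring below is a genuine endomorphism and I may compose freely.

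The crux is the intertwining identity
$$ \psi_T\circ\varphi_T=\varphi_T^2 . $$
This is immediate from the flip relations $F_TT_1=T_2$, $F_TT_2=T_1$, which give $WT_1=T_{22}$ and $WT_2=T_{21}$, so that
$$ \psi_T(\varphi_T(x))=T_{11}xT_{11}^*+T_{12}xT_{12}^*+T_{22}xT_{22}^*+T_{21}xT_{21}^*=\sum_{|\mu|=2}T_\mu xT_\mu^*=\varphi_T^2(x). $$
From this I would prove by induction on $k$ that $(\varphi_T\psi_T)^k=\varphi_T^{\,2k-1}\psi_T$: assuming it for $k$ and using $\psi_T\varphi_T^{\,2k-1}=\varphi_T^{\,2k}$ (obtained by applying the identity once and absorbing the remaining powers), one finds $(\varphi_T\psi_T)^{k+1}=\varphi_T\psi_T\varphi_T^{\,2k-1}\psi_T=\varphi_T^{\,2k+1}\psi_T$.

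Consequently $(\varphi_T\psi_T)^k(\OO_2)=\varphi_T^{\,2k-1}(\psi_T(\OO_2))\subseteq\varphi_T^{\,2k-1}(\OO_2)$. The last ingredient is the standard description $\varphi_T^m(\OO_2)=(\F_T^{(m)})'\cap\OO_2$ of the range of the shift, whence $\bigcap_{m\geq1}\varphi_T^m(\OO_2)=\F_T'\cap\OO_2=\C1$ --- the same triviality of the tail of the shift already used for the conjugated shifts in Example \ref{example2}. Since the exponents $2k-1$ are cofinal in $\N$, intersecting the displayed inclusions yields $\bigcap_{k\geq1}(\varphi_T\psi_T)^k(\OO_2)\subseteq\C1$, and the reverse inclusion is trivial as $\varphi_T\psi_T$ is unital. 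I expect no genuine obstacle: the whole argument is forced once the identity $\psi_T\varphi_T=\varphi_T^2$ is in hand, so the only substantive step is that computation, where the sole thing to get right is the action $F_TT_1=T_2$, $F_TT_2=T_1$ of the flip on the generators.
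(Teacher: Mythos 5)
Your proof is correct, and it takes a genuinely different (and shorter) route than the paper's. The pivotal identity $\psi_T\circ\varphi_T=\varphi_T^2$ does hold: your computation via $F_TT_1=T_2$, $F_TT_2=T_1$ is right, and one can see it even more directly by noting that $F_T$ lies in the copy of $\F_2^{(1)}$ spanned by the $T_iT_j^*$, whose relative commutant in $C^*(T_1,T_2)$ is exactly $\varphi_T(C^*(T_1,T_2))$, so $\Ad F_T$ acts trivially on the image of $\varphi_T$. Your induction $(\varphi_T\psi_T)^k=\varphi_T^{2k-1}\psi_T$ then reduces everything to the standard fact $\bigcap_m\varphi_T^m(C^*(T_1,T_2))=\C1$, which the paper itself invokes without proof in Example \ref{example2}, so there is no circularity or hidden cost. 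The paper argues differently and never uses the intertwining relation: it first observes that $(\varphi_T\psi_T)^k(x)$ is a sum of terms $T_\mu XT_\mu^*$ with $|\mu|=2k$, placing the intersection inside the MASA $\D_T$, and then uses the partial identity $Q_{2k}(\varphi_T\psi_T)^k=Q_{2k}\varphi_T^{2k}$ on the diagonal projection $Q_{2k}$ together with a spectral estimate to rule out non-scalar self-adjoint elements. Your approach buys transparency and brevity — the twist by $\Ad F_T$ is absorbed after a single application of $\varphi_T$, so the ranges are literally contained in ranges of shift powers; the paper's approach only needs the iterates to agree with the shift on a cofinal family of diagonal projections, which is the form in which it is quoted for the companion composition $\psi_V\circ\varphi_V$ in Example \ref{example1} (although your identity handles that case too, and even more simply, since $\psi_V\varphi_V=\varphi_V^2$ gives $(\psi_V\varphi_V)^k=\varphi_V^{2k}$ outright).
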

\begin{proof}
We note that 
$$ \varphi_T\psi_T(x) = T_{11}xT_{11}^* + T_{21}xT_{21}^* + T_{12}(\Ad F_T(x))T_{12}^* 
+ T_{22}(\Ad F_T(x))T_{22}^*. $$
Also, we clearly have $F_TT_1=T_2$ and $F_TT_2=T_1$. Thus $(\varphi_T\psi_T)^k(x)$ may be 
written as a finite sum of elements of the form $T_\mu XT_\mu^*$ with $|\mu|=2k$. This gives 
$$ \bigcap_{k=1}^\infty(\varphi_T\psi_T)^k(C^*(T_1,T_2))\subseteq \D_T'\cap C^*(T_1,T_2)=\D_T. $$
For a positive ineger $k$, let 
$$ Q_k := \sum_{|\mu|=k-1}T_{\mu1}T_{\mu1}^*. $$
Then a straightforward induction on $k$ shows that 
\begin{equation}\label{qeq}
Q_{2k}(\varphi_T\psi_T)^k(x) = Q_{2k}\varphi_T^{2k}(x) 
\end{equation}
for all $x\in C^*(T_1,T_2)$. Take a $d=d^*\in\D_T$ that belongs to $\bigcap_{k=1}^\infty(\varphi_T\psi_T)^k (C^*(T_1,T_2))$. Suppose $d$ is not a constant multiple of $1$. Then there exist $k\in\N$, $t\in\R$, $\epsilon>0$ and $\mu,\nu\in W_2^{2k-1}$ such that 
$$ T_{\mu1}T_{\mu1}^* d \geq (t+\epsilon)T_{\mu1}T_{\mu1}^* \;\;\; \text{and} \;\;\; 
T_{\nu1}T_{\nu1}^* d \leq (t-\epsilon)T_{\mu1}T_{\mu1}^*. $$
Let $x=x^*\in \D_2$ be such that $d=(\varphi_T\psi_T)^k(x)$. Then $Q_{2k}d=Q_{2k}\varphi_T^{2k}(x)$. 
Since $T_{\mu1}T_{\mu1}^*\leq Q_{2k}$ and $T_{\nu1}T_{\nu1}^*\leq Q_{2k}$, we get 
$$ \begin{aligned}
T_{\mu1}xT_{\mu1}^* & = T_{\mu1}T_{\mu1}^*Q_{2k}\varphi_T^{2k}(x)\geq(t+\epsilon)   T_{\mu1}T_{\mu1}^*, \;\;\; \text{ and } \\
T_{\nu1}xT_{\nu1}^* & = T_{\nu1}T_{\nu1}^*Q_{2k}\varphi_T^{2k}(x)\leq(t-\epsilon)   
T_{\nu1}T_{\nu1}^*. 
\end{aligned} $$
This, however, is a contradiction. Indeed, since $T_{\mu 1}$ and $T_{\nu 1}$ are isometries, the above two 
inequalities would imply that both $x \geq (t+\epsilon)$ and $x \leq (t-\epsilon)$. Consequently, 
$$ \bigcap_{k=1}^\infty (\varphi_T\psi_T)^k(C^*(T_1,T_2))=\C1, $$
as required. 
\end{proof}


\end{document}